\newtheorem{defi}{Definition} 
\newtheorem{thm}[defi]{Theorem}
\newtheorem{rem}[defi]{Remark}
 \newtheorem{prop}[defi]{Proposition}
\newtheorem{lemme}[defi]{Lemma}
\newtheorem{cor}[defi]{Corollary}
\newcommand{\twosystem}[2]{\left\{\begin{aligned} &#1\\ &#2\end{aligned}\right.}
\newcommand{\threesystem}[3]{\left\{ \begin{aligned}&#1\\ &#2\\&#3\end{aligned}\right.}
\newcommand{\nero}{\smallskip$\bullet\quad$\rm}
\newcommand{\twoarray}[2]{\begin{aligned}&{#1}\\&{#2}\end{aligned}}
\newcommand{\starred}[1]{#1^{\star}}
\newcommand{\isoper}{\dfrac{{\rm Vol}(\Sigma)}{{\rm Vol}(\Omega)}}
\newcommand{\scal}[2]{\langle{#1},{#2}\rangle}
\newcommand{\Cal }[1]{{\mathcal {#1}}}
\newcommand{\bd}{\partial}
\newcommand{\abs}[1]{\lvert{#1}\rvert}
\newcommand{\reals}{{\bf R}}
\newcommand{\real}[1]{{\bf R}^{#1}}
\newcommand{\sphere}[1]{{\bf S}^{#1}}
\newcommand{\ball}[1]{{\bf B}^{#1}}
\newcommand{\Vol}{{\rm Vol}}
\newcommand{\prin}{\dfrac{\theta'}{\theta}}
\begin{document}

\title{On the spectrum of the Dirichlet-to-Neumann operator acting on forms of a Euclidean domain\footnote{Classification AMS $2000$: 58J50, 35P15 \newline 
Keywords: Manifolds with boundary, Differential forms, Dirichlet-to-Neumann operator, Eigenvalues}} 
\author{ S. Raulot and A. Savo}
\date{\today}
 
\maketitle

\begin{abstract}
We compute the whole spectrum of the Dirichlet-to-Neumann operator acting on differential $p$-forms on the unit Euclidean ball. Then, we prove a new upper bound for its first eigenvalue on  a domain $\Omega$ in Euclidean space in terms of the isoperimetric ratio ${\rm Vol}(\bd\Omega)/{\rm Vol}(\Omega)$.
\end{abstract}


\section{Introduction}


Let $(\Omega^{n+1},g)$ be an $(n+1)$-dimensional compact and connected Riemannian manifold with smooth boundary $\Sigma$. The Dirichlet-to-Neumann operator on functions associates, to each function defined on the boundary, the normal derivative of
its harmonic extension to $\Omega$. More precisely, if $f\in C^{\infty}(\Sigma)$, its harmonic extension $\widehat{f}$ is the unique smooth function on $\Omega$ satisfying
$$
\twosystem
{\Delta \widehat{f}=0\text{ in }\Omega,}
{\widehat{f}= f\text{ on }\Sigma}
$$
and the Dirichlet-to-Neumann operator $T^{[0]}$ is defined by:
\begin{eqnarray*}
T^{[0]}f:=-\frac{\partial \widehat{f}}{\partial N}
\end{eqnarray*}

where $N$ is the inner unit normal to $\Sigma$. It is a well known result (see \cite{taylor} for example) that $T^{[0]}$ is a first order elliptic, non-negative and self-adjoint pseudo-differential operator with discrete spectrum  
\begin{eqnarray*}
0=\nu_{1,0}(\Omega)<\nu_{2,0}(\Omega)\leq\nu_{3,0}(\Omega)\leq\cdots\nearrow\infty. 
\end{eqnarray*} 

As $\Omega$ is connected,  $\nu_{1,0}(\Omega)=0$ is simple, and its eigenspace consists of the constant functions. 
The first positive eigenvalue has the following variational characterization:
\begin{equation}\label{car02}
\nu_{2,0}(\Omega)=\inf\Big\{\dfrac{\int_{\Omega}\abs{df}^2}{\int_{\Sigma}f^2}: f\in C^{\infty}(\Omega)\setminus\{0\}, \int_{\Sigma}f=0\Big\}.
\end{equation}

The study of the spectrum of $T^{[0]}$ was initiated by Steklov in \cite{steklov}. We note that the Dirichlet-to-Neumann map is closely related to the problem of determining a complete Riemannian manifold with boundary from the Cauchy data of harmonic functions. Indeed, a striking result of Lassas, Taylor and Uhlmann \cite{ltu} states that if the manifold $\Omega$ is real analytic and has dimension at least $3$, then the knowledge of $T^{[0]}$ determines $\Omega$ up to isometry. 
\smallskip

It can be easily seen that the eigenvalues of the Dirichlet-to-Neumann map of the unit ball $\ball{n+1}$ in $\real{n+1}$ are $\nu_{k,0}=k$, with $k=0,1,2,...$ and the corresponding eigenspace is given by the vector space of homogeneous harmonic polynomials of degree $k$ restricted to the sphere $\partial\ball{n+1}$.


\subsection{The Dirichlet-to-Neumann operator on forms}


In \cite{R-S}, we extend the definition of the Dirichlet-to-Neumann map $T^{[0]}$ acting on functions to an operator $T^{[p]}$ acting on $\Lambda^p(\Sigma)$, the vector bundle of differential $p$-forms of $\Sigma=\bd\Omega$ for $0\leq p\leq n$. This is done as follows.  Let $\omega$ be a form of degree $p$ on $\Sigma$, with $p=0,1,\dots,n$. Then there exists a unique $p$-form $\widehat{\omega}$ on $\Omega$ such that:
$$
\twosystem
{\Delta\widehat{\omega}=0}
{\starred J\widehat{\omega}=\omega,\,\,i_N\widehat{\omega}=0.}
$$
Here $\Delta=d\delta+\delta d$ is the Hodge Laplacian acting on $\Lambda^p(\Omega)$ (the bundle of $p$-forms on $\Omega$) $\starred J:\Lambda^p(\Omega)\to\Lambda^p(\Sigma)$ is the restriction map and $i_N$ is the interior product of $\widehat{\omega}$ with the inner unit normal vector field $N$. The existence and uniqueness of the form $\widehat{\omega}$ (called the {\it harmonic tangential extension of $\omega$}) is proved, for example, in Schwarz \cite{schwarz}. We let:
$$
T^{[p]}\omega=-i_Nd\widehat{\omega}.
$$
Then $T^{[p]}:\Lambda^p(\Sigma)\to \Lambda^p(\Sigma)$ defines a linear operator, {\it the (absolute) Dirichlet-to-Neumann operator}, which reduces to the classical Dirichlet-to-Neumann operator $T^{[0]}$ acting on functions when $p=0$. We proved in \cite{R-S} that $T^{[p]}$ is an elliptic self-adjoint and non-negative pseudo-differential operator, with discrete spectrum
$$
0\leq\nu_{1,p}(\Omega)\leq\nu_{2,p}(\Omega)\leq\dots
$$
tending to infinity. Note that $\nu_{1,p}(\Omega)$ can in fact be zero: it is not difficult to prove that ${\rm Ker}T^{[p]}$ is isomorphic 
to $H^p(\Omega)$, the $p$-th absolute de Rham cohomology space of $\Omega$ with real coefficients. 

\medskip

The operator $T^{[p]}$ belongs to a family of operators first considered by G. Carron in \cite{carron}. Other Dirichlet-to-Neumann operators acting on differential forms, but different from ours, were introduced by Joshi and Lionheart in \cite{joshi}, and Belishev and Sharafutdinov in \cite{belishev}. In fact, our operator $T^{[p]}$ appears in a certain matrix decomposition of the Joshi and Lionheart operator (see \cite{R-S} for complete references). However, one advantage of our operator is its self-adjointness, which permits to study its spectral and variational properties. In particular one has (see \cite{R-S}):
\begin{equation}\label{var}
\nu_{1,p}(\Omega)=\inf\Big\{\dfrac{\int_{\Omega}\abs{d\omega}^2+\abs{\delta\omega}^2}{\int_{\Sigma}\abs{\omega}^2}: \omega\in\Lambda^{p}(\Omega)\setminus \{0\}, \, i_N\omega=0
\,\,\text{on $\Sigma$}\Big\}.
\end{equation}

For $p=0,\dots, n$, we also have a dual operator $T^{[p]}_D:\Lambda^p(\Omega)\to\Lambda^p(\Omega)$ with eigenvalues
$\nu_{k,p}^D(\Omega)=\nu_{k,n-p}(\Omega)$ (for its definition, we refer to \cite{R-S}). Here we just want to observe that:
\begin{equation}\label{vardual}
\nu_{1,p}^D(\Omega)=\inf\Big\{\dfrac{\int_{\Omega}\abs{d\omega}^2+\abs{\delta\omega}^2}{\int_{\Sigma}\abs{\omega}^2}: \omega\in\Lambda^{p+1}(\Omega)\setminus \{0\}, \, \starred J\omega=0
\,\,\text{on $\Sigma$}\Big\}.
\end{equation}

In \cite{R-S}, we obtained sharp upper and lower bounds of $\nu_{1,p}(\Omega)$ in terms of the extrinsic geometry of its boundary: let us briefly explain the main lower bound. 

\smallskip

Fix $x\in\Sigma$ and consider the principal curvatures $\eta_1(x),\dots,\eta_n(x)$ of $\Sigma$ at $x$; if $p=1,\dots,n$ and $1\leq j_1<\dots<j_p\leq n$ is a multi-index, we call the number $\eta_{j_1}(x)+\dots+\eta_{j_p}(x)$ a \it p-curvature \rm of $\Sigma$. We set:
$$
\twoarray
{\sigma_p(x)=\inf\{\eta_{j_1}(x)+\dots+\eta_{j_p}(x): 1\leq j_1<\dots<j_p\leq n\}}
{\sigma_p(\Sigma)=\inf\{\sigma_p(x): x\in\Sigma\}}
$$
and say that $\Sigma$ is {\it $p$-convex} if $\sigma_p(\Sigma)\geq 0$ that is, if all $p$-curvatures of $\Sigma$ are non-negative. For example $\Sigma$ is $1$-convex  if and only if it is convex in the usual sense,  and it is $n$-convex if and only if it is mean-convex (that is, it has non-negative mean curvature everywhere). 

\smallskip

We then proved that for a compact domain $\Omega$ in $\real{n+1}$ with $p$-convex boundary, one has 
\begin{eqnarray}\label{nonsharp}
\nu_{1,p}(\Omega)> \dfrac{n-p+2}{n-p+1}\sigma_p(\Sigma)
\end{eqnarray}
for $0\leq p<\frac{n+1}{2}$ and 
\begin{eqnarray}\label{sharp}
\nu_{1,p}(\Omega)\geq \frac{p+1}p\sigma_p(\Sigma)
\end{eqnarray}
for $(n+1)/2\leq p\leq n$. The inequality \eqref{nonsharp} is never sharp, but \eqref{sharp} is sharp for Euclidean balls and actually  equality characterizes the ball when $p>(n+1)/2$.  
For all this, and for similar inequalities  in Riemannian manifolds we refer to \cite{R-S}.

\smallskip

In this paper we continue the study of the spectral properties of $T^{[p]}$. Namely:

\smallskip

\nero  We compute the whole spectrum of $T^{[p]}$ and describe its eigenforms on the unit ball in $\real{n+1}$.

\smallskip

\nero We give a sharp upper bound for the first eigenvalue of $T^{[p]}$ on Euclidean domains, in terms of the isoperimetric ratio $\isoper$.

\smallskip

It is perhaps worth noticing that in dimension $3$ we have the following interpretation of \eqref{var} and \eqref{vardual} in terms of vector fields. If $\Omega$ is a bounded domain in $\real{3}$, then for all vector fields $X$ on $\Omega$ which are tangent to the boundary $\Sigma$ one has:
\begin{equation}\label{dct}
\int_{\Omega}\Big(\abs{{\rm div}X}^2+\abs{{\rm curl}X}^2\Big)\geq \nu_{1,1}(\Omega)\int_{\Sigma}\abs {X}^2,
\end{equation}
with equality  iff $X$ is harmonic and its dual $1$-form  restricts to an eigenform of $T^{[1]}$ associated to $\nu_{1,1}(\Omega)$. Recall that,
on a three dimensional Riemannian manifold,  the curl of a vector field $X$ is the vector field defined  by 
$$
{\rm curl} X=\big(\star dX^\sharp\big)^\sharp
$$ 
where $^\sharp$ denotes the musical isomorphism between the tangent space and the cotangent space. Combining (\ref{dct}) with the estimate (\ref{nonsharp}) gives, for all Euclidean domains with convex boundary:
\begin{equation}\label{as}
\int_{\Omega}\Big(\abs{{\rm div}X}^2+\abs{{\rm curl}X}^2\Big)>\dfrac 32 \sigma_1(\Sigma)\int_{\Sigma}\abs {X}^2,
\end{equation}
where $X$ is any vector field tangent to $\Sigma$ and $\sigma_1(\Omega)$ is a lower bound of the principal curvatures of $\Sigma$. As a by-product of the calculation in Section \ref{SB}, we will see that \eqref{as} is almost sharp, because for all vector fields on the unit ball (tangent to the boundary)
we have the sharp inequality
\begin{equation}\label{mintan}
\int_{\ball{3}}\Big(\abs{\rm{div} X}^2 +\abs{\rm{curl} X}^2\Big)\geq \dfrac 53\int_{\bd\ball 3}\abs{X}^2
\end{equation}
(for the description of the minimizing vector fields for the inequality \eqref{mintan} we refer to Section \ref{vectors}). 

Similarly, let $X$ be a  vector field on a Euclidean domain $\Omega$ which is normal to the boundary. Then:
\begin{equation}\label{dcn}
\int_{\Omega}\Big(\abs{{\rm div}X}^2+\abs{{\rm curl}X}^2\Big)\geq \nu_{1,2}(\Omega)\int_{\Sigma}\abs {X}^2,
\end{equation}
with equality iff  $X$ is harmonic and the Hodge-star of its dual $1$-form restricts to an eigenform of $T^{[2]}$ associated to $\nu_{1,2}(\Omega)$. Using (\ref{sharp}), we see that, if $\Sigma$ is mean-convex:
\begin{equation}\label{mc}
\int_{\Omega}\Big(\abs{{\rm div}X}^2+\abs{{\rm curl}X}^2\Big)\geq \dfrac{3}2 \sigma_2(\Sigma)\int_{\Sigma}\abs {X}^2.
\end{equation}
Note that $\sigma_2(\Sigma)=2H$, where $H$ is a lower bound of the mean curvature of $\Sigma$. In this situation, our inequality is sharp and is an equality if and only if $\Omega$ is a ball in  $\real 3$, in which case the lower bound is $3$, and $X$ is a multiple of the radial vector field $r\frac{\bd}{\bd r}$
(see Section \ref{vectors}). 
  
  \smallskip
  
We end this discussion by remarking that inequalities (\ref{as}) and (\ref{mc}) continue to be true for all bounded domains in three-dimensional Riemannian manifolds with non-negative Ricci curvature.


\subsection {The spectrum of $T^{[p]}$ on the unit Euclidean ball in $\real{n+1}$}\label{SB}


In this section we compute the spectrum of $T^{[p]}$ on the unit ball $\mathbf{B}^{n+1}$ in $\real{n+1}$.

\smallskip

Let $\bar{\Delta}$ (resp. $\Delta$) denote the Hodge Laplacian acting on $p$-forms of $\sphere n$ (resp. $\real{n+1}$). It will turn out that $\bar{\Delta}$ and $T^{[p]}$ have the same eigenspaces: so we describe in details the eigenspaces of $\bar{\Delta}$. 

\medskip

We start from the case $p=0$, which is classical. The operator $\bar{\Delta}$ is simply the Laplacian on functions, and it is a well-known fact that its eigenfunctions are restrictions to $\sphere n$ of homogeneous polynomial harmonic functions on $\real{n+1}$. Precisely, let $P_{k,0}$ be the vector space of all polynomial functions on $\real{n+1}$ of homogeneous degree $k$, where $k=0,1,2,\dots$, and set:
$$
H_{k,0}=\{f\in P_{k,0}: \Delta f=0\}.
$$
Then the spectrum of $\bar{\Delta}$ acting on functions of $\sphere n$ is given by the eigenvalues
$$
\mu''_{k,0}=k(n+k-1), \quad k=0,1,2,\dots,
$$
with multiplicity $M_{k,0}={\rm dim}(H_{k,0})$ and associated eigenspace $\starred J(H_{k,0})$. 

\smallskip

Now fix an eigenfunction $f\in \starred J(H_{k,0})$ so that, by assumption, its harmonic extension $\hat f$ is a harmonic polynomial of homogeneous degree $k$. It is very easy to see that $T^{[0]}f=kf$: so, $f$ is also a Dirichlet-to-Neumann eigenfunction associated to the eigenvalue $k$. A standard density argument shows that these are all possible eigenvalues of $T^{[0]}$. Therefore we have the following well-known result:
\begin{thm}\label{DNF} 
The spectrum of the Dirichlet-to-Neumann operator $T^{[0]}$ on the unit ball in $\real{n+1}$ consists of the eigenvalues $\nu''_{k,0}=k$, where $k=0,1,2,\dots$, each with multiplicity $M_{k,0}={\rm dim}(H_{k,0})$ and associated eigenspace $\starred J(H_{k,0})$.
\end{thm}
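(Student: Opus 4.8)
The plan is to make rigorous the informal argument already sketched in the text. The key point is to verify that, on the unit ball, every spherical harmonic of degree $k$ is a Dirichlet-to-Neumann eigenfunction with eigenvalue $k$, and then to argue by density that no other eigenvalues occur. First I would take $f \in \starred J(H_{k,0})$, so that by definition the harmonic extension $\widehat f$ of $f$ is the unique harmonic function on $\ball{n+1}$ restricting to $f$ on $\sphere n$; but since $H_{k,0}$ consists of harmonic polynomials homogeneous of degree $k$, the element $P \in H_{k,0}$ with $\starred J P = f$ is itself harmonic on all of $\real{n+1}$, and by uniqueness of the harmonic extension we get $\widehat f = P|_{\ball{n+1}}$. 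Then I would compute $-\partial \widehat f/\partial N$ at a boundary point: writing in polar coordinates $P = r^k P|_{\sphere n}$ by homogeneity, the inner unit normal on $\sphere n$ is $N = -\partial/\partial r$, so $-\partial \widehat f/\partial N = \partial P/\partial r|_{r=1} = k\, r^{k-1} P|_{\sphere n}\big|_{r=1} = k f$. Hence $T^{[0]} f = k f$, i.e. every such $f$ is an eigenfunction with eigenvalue $k$.

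Next I would show these exhaust the spectrum. The eigenspaces $\starred J(H_{k,0})$, $k = 0,1,2,\dots$, are exactly the eigenspaces of the Laplacian $\bar\Delta$ on $\sphere n$, and by the spectral theorem for $\bar\Delta$ (a nonnegative elliptic self-adjoint operator on a closed manifold) their algebraic direct sum is dense in $L^2(\sphere n)$, indeed dense in $C^\infty(\sphere n)$ in the appropriate topology. Since $T^{[0]}$ has discrete spectrum and each $\starred J(H_{k,0})$ is contained in the eigenspace of $T^{[0]}$ for the eigenvalue $k$, these eigenspaces are pairwise orthogonal and their span is dense; any further eigenvalue of $T^{[0]}$ would have an eigenfunction orthogonal to all of them, hence zero. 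Therefore $\operatorname{spec}(T^{[0]}) = \{0,1,2,\dots\}$ with the stated eigenspaces. The only subtlety is that a priori the $T^{[0]}$-eigenspace for the integer $k$ could be strictly larger than $\starred J(H_{k,0})$; but the density argument combined with orthogonality of eigenspaces of the self-adjoint operator $T^{[0]}$ forces equality, so the multiplicity is exactly $M_{k,0} = \dim(H_{k,0})$.

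The main (minor) obstacle is purely bookkeeping: one must be careful with the sign convention for $N$ (the \emph{inner} normal) so that the eigenvalue comes out $+k$ rather than $-k$, and one must invoke uniqueness of the harmonic tangential extension (here just the classical Dirichlet problem) to identify $\widehat f$ with the homogeneous harmonic polynomial. No hard analysis is involved; this is why the statement is labelled ``well-known.'' I would present the computation of $T^{[0]}f = kf$ in one or two lines and then cite the standard density of spherical harmonics in $L^2(\sphere n)$ together with self-adjointness of $T^{[0]}$ to conclude.
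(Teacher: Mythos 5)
Your proposal is correct and follows exactly the route the paper takes (and merely sketches): identify the harmonic extension of $f\in \starred J(H_{k,0})$ with the homogeneous harmonic polynomial via uniqueness of the Dirichlet problem, compute $-\partial\widehat f/\partial N=kf$ from Euler's relation, and conclude by density of the spherical harmonics in $L^2(\sphere n)$ together with orthogonality of eigenspaces of the self-adjoint operator $T^{[0]}$. Your write-up simply makes explicit the steps the paper dismisses as ``very easy to see'' and ``a standard density argument.''
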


Now assume $p=1,\dots,n$. The calculation of the spectrum of the Hodge Laplacian on the sphere was first done in \cite{gallot-meyer}. We follow the exposition in \cite{it}. 

\smallskip

As the Hodge Laplacian commutes with both the differential and the codifferential, it preserves closed (resp. co-closed) $p$-forms. Moreover, any exact eigenform is the differential of a co-exact eigenform associated to the same eigenvalue. In the following, we denote by $\{\mu'_{k,p}\}$ (resp. $\{\mu''_{k,p}\}$) the spectrum of the Hodge Laplacian restricted to closed (resp. co-closed) $p$-forms on the sphere $\sphere{n}$.
\smallskip

We can write a $p$-form on $\real{n+1}$ as:
$$
\omega=\sum_{i_1<\dots<i_p}\omega_{i_1\dots i_p}dx_{i_1}\wedge\dots\wedge dx_{i_p}
$$
and we say that $\omega$ is polynomial if each component $\omega_{i_1\dots i_p}$ is a polynomial function. 

\smallskip

Now let $P_{k,p} $ be the vector space of  polynomial $p$-forms of homogeneous degree $k\geq 0$ on $\real{n+1}$ and set:
$$
\threesystem
{H_{k,p}=\{\omega\in P_{k,p}: \Delta\omega=0, \delta\omega=0\},}
{H'_{k,p}=\{\omega\in H_{k,p}: d\omega=0\}}
{H''_{k,p}=\{\omega\in H_{k,p}: i_Z\omega=0\}}
$$
where $Z$ is the radial vector field $Z=r\frac{\partial}{\partial r}=\sum_{j=1}^{n+1}x_j\frac{\partial}{\partial x_j}$. On the boundary, $Z=-N$.
It turns out that $H_{k,p}=H'_{k,p}\oplus H''_{k,p}$ and $d:H''_{k,p}\to H'_{k-1,p+1}$ is a linear isomorphism for all $k\geq 1$. We set
$$
M_{k,p}={\rm dim}(H''_{k,p})
$$
(this number can be computed by representation theory, see Theorem 6.8 in \cite{it}).

\smallskip

By the Hodge-de Rham decomposition, any Hodge-Laplace eigenspace splits into the direct sum of its co-exact, exact and harmonic parts. But in the range $1\leq p\leq n$ the only harmonic forms occur in degree $p=n$, and are multiples of the volume form of $\sphere n$. Moreover, for $p=n$ the co-exact part is reduced to zero. Then, there is a spectral resolution of $L^2(\Lambda^p(\sphere{n}))$ which consists of the following three types of Hodge-Laplace eigenforms: $1\leq p\leq n-1$ and the eigenform $\xi$ is co-exact;
 $1\leq p\leq n$ and the eigenform $\xi$ is exact;  $p=n$ and $\xi$ is a multiple of the volume form of $\sphere n$. Correspondingly, we have the following three families of eigenvalues (see \cite{it}):
 
\nero If $1\leq p\leq n-1$ and $\xi$ is co-exact the associated eigenvalues are given by the family
$$
\mu''_{k,p}=(k+p)(n+k-p-1)\quad k=1,2,\dots
$$
with associated eigenspace $\starred J(H''_{k,p})$ and multiplicity $M_{k,p}$.

\nero If $1\leq p\leq n$ and $\xi$ is exact the associated eigenvalues are given by the family
$$
\mu'_{k,p}=(k+p-1)(n+k-p)\quad k=1,2,\dots
$$
with associated eigenspace $\starred J(H'_{k-1,p})$ and multiplicity $M_{k,p-1}$.

\nero If $p=n$ and $\xi$ is the volume form of $\sphere n$ we have the associated eigenvalue $\mu''_{1,n}=0$. 

\medskip

In conclusion, eigenforms of the Hodge Laplacian are suitable restrictions to $\sphere{n}$ of harmonic, co-closed polynomial forms. 
 
\smallskip
 
In Section \ref{HTE} we will prove that any Hodge-Laplace eigenform of $\sphere n$ is also a Dirichlet-to-Neumann eigenform (associated to a different eigenvalue). This will imply the following calculations. 

\begin{thm}\label{DNS} 
Let $1\leq p\leq n-1$. The spectrum of the Dirichlet-to-Neumann operator $T^{[p]}$ on the unit ball of $\real{n+1}$  is given by the following two families of eigenvalues $\{\nu_{k,p}''\}, \{\nu_{k,p}'\}$ indexed by a positive integer $k=1,2,\dots$:
$$
\twosystem
{\nu_{k,p}'' = k+p\quad\text{with multiplicity $M_{k,p}$},}
{\nu_{k,p}' = (k+p-1)\dfrac{n+2k+1}{n+2k-1}\qquad\text{with multiplicity $M_{k,p-1}$}.}
$$
The eigenspace associated to $\nu_{k,p}''$ is $\starred J(H''_{k,p})$ and consists of co-exact forms. 
The eigenspace associated to $\nu_{k,p}'$ is $\starred J(H'_{k-1,p})$ and consists of exact forms.
\end{thm}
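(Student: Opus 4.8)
The strategy is exactly the one announced before the statement: show that every Hodge--Laplace eigenform of $\sphere n$ is also an eigenform of $T^{[p]}$, by writing down its harmonic tangential extension explicitly and contracting the exterior derivative of that extension with $N$. Once this is done, completeness is immediate: the spaces $\starred J(H''_{k,p})$ and $\starred J(H'_{k-1,p})$ form, by the computation of the spectrum of the Hodge Laplacian recalled above, a complete orthogonal decomposition of $L^2(\Lambda^p(\sphere n))$; since each of them consists of eigenforms of the self-adjoint operator $T^{[p]}$, the two families $\{\nu''_{k,p}\}$, $\{\nu'_{k,p}\}$ exhaust its spectrum, with the stated eigenspaces and multiplicities.

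\emph{The co-exact family.} Let $\omega=\starred J(\xi)$ with $\xi\in H''_{k,p}$. Then $\xi$, viewed as a form on $\ball{n+1}$, is harmonic, satisfies $\starred J\xi=\omega$, and on $\sphere n$ has $i_N\xi=-i_Z\xi=0$ because $i_Z\xi=0$ and $Z=-N$ on the boundary; by uniqueness of the harmonic tangential extension, $\widehat\omega=\xi$. Now combine Cartan's formula $\mathcal L_Z=d\,i_Z+i_Z\,d$ with the homogeneity identity $\mathcal L_Z\xi=(k+p)\xi$, valid for any polynomial $p$-form of degree $k$. Since $i_Z\xi=0$ this gives $i_Z d\xi=(k+p)\xi$, hence $T^{[p]}\omega=-i_N d\xi=i_Z d\xi=(k+p)\xi$; restricting to $\sphere n$ yields $T^{[p]}\omega=(k+p)\,\omega$, so $\nu''_{k,p}=k+p$ with eigenspace $\starred J(H''_{k,p})$ and multiplicity $M_{k,p}$.

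\emph{The exact family.} Using the isomorphism $d\colon H''_{k,p-1}\to H'_{k-1,p}$, write an exact Hodge--Laplace eigenform as $\omega=\starred J(d\zeta)$ with $\zeta\in H''_{k,p-1}$. The form $d\zeta\in H'_{k-1,p}$ is harmonic and restricts to $\omega$, but on $\sphere n$ its normal part is $i_N(d\zeta)=-i_Z d\zeta=-(k+p-1)\zeta\neq0$, so it is not the harmonic tangential extension. The plan is to correct it by an auxiliary harmonic form of degree $k+1$,
$$\psi:=\abs{x}^2\,d\zeta-(n+2k-1)\,\Big(\textstyle\sum_j x_j\,dx_j\Big)\wedge\zeta .$$
Working componentwise on $\real{n+1}$, where the Hodge Laplacian is minus the Euclidean Laplacian, one checks that $\Delta(\abs{x}^2 d\zeta)=-(2n+4k-2)\,d\zeta$ and $\Delta\big((\sum_j x_j dx_j)\wedge\zeta\big)=-2\,d\zeta$, so the coefficient $-(n+2k-1)$ is precisely the one making $\psi$ harmonic. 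Since $\sum_j x_j\,dx_j=\tfrac12 d(\abs{x}^2)$ restricts to $0$ on $\sphere n$, one also gets $\starred J\psi=\omega$ and, computing as above, $i_N\psi=(n+k-p)\zeta$ on $\sphere n$. Therefore the unique harmonic tangential extension is the linear combination with the correct restriction and vanishing normal part,
$$\widehat\omega=\frac{n+k-p}{\,n+2k-1\,}\,d\zeta+\frac{k+p-1}{\,n+2k-1\,}\,\psi .$$
Finally, $d(d\zeta)=0$ and $d(\sum_j x_j dx_j)=0$ give $d\psi=(n+2k+1)\big(\sum_j x_j dx_j\big)\wedge d\zeta$, hence $d\widehat\omega=\dfrac{(k+p-1)(n+2k+1)}{n+2k-1}\big(\sum_j x_j dx_j\big)\wedge d\zeta$; contracting with $N$ (the factor $i_N(\sum_j x_j dx_j)=\langle Z,N\rangle=-1$ cancels the overall sign) and restricting to $\sphere n$ (which kills the remaining term) gives $T^{[p]}\omega=(k+p-1)\dfrac{n+2k+1}{n+2k-1}\,\omega=\nu'_{k,p}\,\omega$, with eigenspace $\starred J(H'_{k-1,p})$ and multiplicity $M_{k,p-1}=\dim H''_{k,p-1}$.

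The main obstacle is the exact case: one must guess the auxiliary form $\psi$ and verify its harmonicity via the componentwise (Weitzenb\"ock) computation, and then check that the displayed combination genuinely satisfies all three defining properties of the harmonic tangential extension, so that the uniqueness theorem of Schwarz applies and identifies it with $\widehat\omega$. Everything else --- the Cartan-formula identities, the boundary relations $i_N(\sum_j x_j dx_j)=-1$ and $\starred J(\sum_j x_j dx_j)=0$, the homogeneity of $\mathcal L_Z$, and the final density/completeness argument --- is routine.
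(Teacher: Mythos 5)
Your proof is correct, and at the structural level it is the same as the paper's: establish that every Hodge--Laplace eigenform of $\sphere n$ is a $T^{[p]}$-eigenform by exhibiting its harmonic tangential extension explicitly, then invoke $L^2$-density of the Hodge eigenspaces for completeness. Indeed, your correcting form $\psi=\abs{x}^2\,d\zeta-(n+2k-1)Z^{\flat}\wedge\zeta$ (with $Z^{\flat}=\sum_j x_j dx_j$) is, after passing to polar coordinates, exactly $\tfrac{n+k-p}{k+p-1}$ times the paper's $\widehat\eta=\alpha_{k,p}r^{k+p+1}\bar d\phi-\nu''_{k,p-1}r^{k+p}dr\wedge\phi$, and your $\widehat\omega$ is a scalar multiple of the paper's $d\widehat\phi+\widehat\eta$. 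Where you genuinely diverge is in the computational machinery: you stay in rectangular coordinates and use Cartan's formula together with Euler's homogeneity identity and the fact that the Euclidean Hodge Laplacian acts componentwise (I checked $\Delta(\abs{x}^2 d\zeta)=-(2n+4k-2)d\zeta$, $\Delta(Z^{\flat}\wedge\zeta)=-2\,d\zeta$, $i_N\psi=(n+k-p)\zeta$ and $d\psi=(n+2k+1)Z^{\flat}\wedge d\zeta$; all are right). This bypasses entirely Lemma \ref{PolarEu}, i.e.\ the separated-variables expression of the Hodge Laplacian on the rotationally symmetric metric and the resulting ODE system \eqref{harm2}, which is the main technical investment of Section \ref{polar} of the paper. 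The trade-off: your route is more elementary and self-contained for the flat ball, while the paper's polar-coordinate lemma applies verbatim to geodesic balls in arbitrary rotationally symmetric manifolds (as exploited in Remark 12), which your rectangular-coordinate identities do not.
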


In degree $p=n$:
\begin{thm}\label{DNSn} 
The spectrum of the Dirichlet-to-Neumann operator $T^{[n]}$ on the unit ball of $\real{n+1}$ consists of the eigenvalues:
$$
\nu_{1,n}''=n+1,
$$
with multiplicity one (the associated eigenspace being spanned by the volume form of $\sphere n$) and, for $k\geq 1$:
$$
\nu_{k,n}'=(k+n-1)\dfrac{n+2k+1}{n+2k-1},
$$
with multiplicity $M_{k,n-1}$ and associated eigenspace $\starred J(H'_{k-1,n}$). 
\end{thm}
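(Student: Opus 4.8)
The plan is to follow the scheme of the proof of Theorem~\ref{DNS}. As will be shown in Section~\ref{HTE}, every eigenform of the Hodge Laplacian $\bar\Delta$ on $\sphere n$ restricts to an eigenform of $T^{[n]}$, the eigenvalue being read off from an explicit harmonic tangential extension; so one applies that construction to each piece of the spectral resolution of $\Lambda^n(\sphere n)$. Since $\sphere n$ carries no nonzero $(n+1)$-form, every $n$-form on $\sphere n$ is closed, and the Hodge decomposition writes $L^2(\Lambda^n(\sphere n))$ as the orthogonal sum of the exact $n$-forms and the harmonic $n$-forms, the latter being the line spanned by the volume form, with no co-exact part. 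Hence the eigenforms of $T^{[n]}$ are those in $\starred J(H'_{k-1,n})$ for $k\geq 1$ (consisting of exact forms), together with the volume form; since $T^{[n]}$ is self-adjoint with discrete spectrum and this family is complete, these are all of them. The exact family is treated exactly as the exact family of Theorem~\ref{DNS}, whose argument applies verbatim to $p=n$, so the one genuinely new point is the volume form.

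\emph{The volume form.} Let $\Omega_0=dx_1\wedge\dots\wedge dx_{n+1}$ be the volume form of $\real{n+1}$, let $Z=r\frac{\partial}{\partial r}=\sum_jx_j\frac{\partial}{\partial x_j}$, and put $Z^\flat=\sum_jx_j\,dx_j=\tfrac12\,d(r^2)$, so that the volume form of $\sphere n$ equals $\starred J(i_Z\Omega_0)$. I claim $i_Z\Omega_0$ is its harmonic tangential extension: indeed $i_Zi_Z\Omega_0=0$, so its normal part vanishes on $\sphere n$; its restriction is correct by construction; and since $i_Z\Omega_0$ is, up to sign, the Hodge star of $Z^\flat$, harmonicity follows from $\Delta Z^\flat=\tfrac12\,d(\Delta r^2)=0$ (because $\Delta r^2=-2(n+1)$ is constant) and the fact that $\Delta$ commutes with $\star$. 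By Cartan's formula and $d\Omega_0=0$, $d(i_Z\Omega_0)=\mathcal L_Z\Omega_0=(n+1)\Omega_0$ (as $\Omega_0$ is homogeneous of degree $n+1$), hence
\[
T^{[n]}\big(\mathrm{vol}_{\sphere n}\big)=-\starred J\big(i_N\,d(i_Z\Omega_0)\big)=(n+1)\,\starred J(i_Z\Omega_0)=(n+1)\,\mathrm{vol}_{\sphere n},
\]
so $\nu''_{1,n}=n+1$, with one-dimensional eigenspace.

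\emph{The exact family.} Fix $\eta\in H'_{k-1,n}$ and, via the isomorphism $d:H''_{k,n-1}\to H'_{k-1,n}$, write $\eta=d\psi$ with $\psi\in H''_{k,n-1}$; since $i_Z\psi=0$ and $\psi$ is homogeneous of degree $k$, Cartan's formula gives $i_Z\eta=\mathcal L_Z\psi=(k+n-1)\psi$. I would look for the harmonic tangential extension of $\xi=\starred J\eta$ in the form $\widehat\xi=a\,\eta+b\,r^2\eta+c\,Z^\flat\wedge\psi$. The three requirements — $\starred J\widehat\xi=\xi$ (note $\starred J Z^\flat=\tfrac12\,d(\starred J r^2)=0$, so $a+b=1$), $i_Z\widehat\xi=0$ on $\sphere n$ (so $c=-(k+n-1)$), and $\Delta\widehat\xi=0$ — determine $a,b,c$; the last one is evaluated with the Bochner identity $\Delta(f\beta)=(\Delta f)\beta+f\,\Delta\beta-2\nabla_{\nabla f}\beta$, together with $\Delta r^2=-2(n+1)$, $\nabla_Z\beta=m\beta$ for $\beta$ homogeneous of degree $m$, and $\Delta\psi=\Delta\eta=0$, and the outcome is
\[
a=\frac{k}{n+2k-1},\qquad b=\frac{k+n-1}{n+2k-1},\qquad c=-(k+n-1).
\]
With this $\widehat\xi$, using $d\eta=0$ and $dZ^\flat=0$ one gets the top-degree form $d\widehat\xi=(k+n-1)\frac{n+2k+1}{n+2k-1}\,Z^\flat\wedge\eta$, so that
\[
T^{[n]}\xi=\starred J\big(i_Z\,d\widehat\xi\big)=(k+n-1)\frac{n+2k+1}{n+2k-1}\,\starred J(r^2\eta)=(k+n-1)\frac{n+2k+1}{n+2k-1}\,\xi,
\]
again since $\starred J(Z^\flat\wedge\psi)=0$. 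The multiplicity is $\dim\starred J(H'_{k-1,n})=\dim H''_{k,n-1}=M_{k,n-1}$, by the same isomorphism and the injectivity of $\starred J$ on $H'_{k-1,n}$.

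The step I expect to be the real obstacle is producing the harmonic tangential extension in the exact case. The naive guess $\widehat\xi=d(g(r)\psi)$ cannot work: harmonicity of $d(g(r)\psi)$ forces $g(r)=A+Br^{1-n-2k}$, which is singular at the origin unless $B=0$, and then $\widehat\xi=A\,d\psi$ has nonzero normal part on $\sphere n$. The correct extension genuinely mixes $\eta$, $r^2\eta$ and $Z^\flat\wedge\psi$, and it is the Bochner computation of $\Delta(r^2\eta)$ and $\Delta(Z^\flat\wedge\psi)$ that brings in $n$ and $k$ and produces the factor $\dfrac{n+2k+1}{n+2k-1}$. Once $\widehat\xi$ is known the rest is quick, since $d\widehat\xi$ is a multiple of $\Omega_0$ and $i_N d\widehat\xi$ is then immediate. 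One should also record the harmless coincidence that for $n=1$ the volume-form eigenvalue $n+1=2$ reappears in the exact family at $k=1$.
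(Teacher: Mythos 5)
Your argument is correct and produces the same harmonic tangential extensions as the paper, but by a genuinely different computational engine. The paper (Proposition \ref{coclosed}, parts b) and c)) runs everything through the polar-coordinate expression of the Hodge Laplacian (Lemma \ref{PolarEu}): the volume form is extended as $r^{n+1}\xi$ by checking the ODE \eqref{harm1}, and the exact eigenform $\bar d\phi$ is extended as $d\widehat\phi+\widehat\eta$ with $\widehat\eta=\alpha_{k,p}\,r^{k+p+1}\bar d\phi-\nu''_{k,p-1}r^{k+p}dr\wedge\phi$, harmonicity being verified through the system \eqref{harm2}. You instead stay entirely in rectangular coordinates with polynomial forms: $i_Z\Omega_0=\star Z^{\flat}$ is harmonic because $Z^{\flat}=\tfrac12 d(r^2)$ is, and the ansatz $a\eta+br^2\eta+cZ^{\flat}\wedge\psi$ is pinned down by the flat-space product rule $\Delta(f\beta)=(\Delta f)\beta+f\Delta\beta-2\nabla_{\nabla f}\beta$ together with Euler's identity and Cartan's formula. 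I checked your constants: $\Delta(r^2\eta)=-(2n+4k-2)\eta$, $\Delta(Z^{\flat}\wedge\psi)=-2\eta$, whence $a=k/(n+2k-1)$, $b=(k+n-1)/(n+2k-1)$, $c=-(k+n-1)$, and the eigenvalue $2b-c=(k+n-1)\tfrac{n+2k+1}{n+2k-1}$ all come out right; moreover your extensions coincide with the paper's, since $i_Z\Omega_0=r^{n+1}\,\mathrm{vol}_{\sphere n}$ in polar coordinates and your three-term combination is (up to normalization) the paper's $d\widehat\phi+\widehat\eta$. What your route buys is independence from Lemma \ref{PolarEu} and a more algebraic derivation of the factor $\tfrac{n+2k+1}{n+2k-1}$; what the paper's route buys is uniformity, since the single polar-coordinate lemma handles all degrees $1\leq p\leq n$ at once and extends verbatim to geodesic balls in rotationally symmetric manifolds (see the Remark following Lemma \ref{pol}). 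Your completeness argument (density of the Hodge eigenspaces, absence of a co-exact part in top degree) and the multiplicity count via the isomorphism $d:H''_{k,n-1}\to H'_{k-1,n}$ are exactly as in the paper, as is the observation about the coincidence of eigenvalues when $n=1$.
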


From this result we deduce that the lowest eigenvalue of $T^{[n]}$ is $\nu_{1,n}=n+1$ with multiplicity one for $n\geq 2$ and three for $n=1$. Indeed, in this last situation, we have $\nu_{1,1}''=\nu_{2,1}'=2$ and the corresponding eigenspace is spanned by the the volume form $v$ of $\sphere{1}$, $\bar{d}x$ and $\bar{d}y$ where $\bar{d}$ denotes the differential on $\sphere{1}$. From the above results we obtain:
\begin{cor}\label{first}
If  $\nu_{1,p}$ denotes the first eigenvalue of $T^{[p]}$ on the unit ball in $\real{n+1}$, then:
$$
\nu_{1,p}=\left\{\aligned &\dfrac{n+3}{n+1} p\qquad\text{if}\qquad 1\leq p\leq\dfrac{n+1}2\\
& p+1\qquad\text{if}\qquad \dfrac{n+1}2\leq p\leq n.
\endaligned\right.
$$
\end{cor}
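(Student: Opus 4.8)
The statement is a direct consequence of Theorems \ref{DNS} and \ref{DNSn}: the plan is to write down the two families of eigenvalues produced there and minimize over the index $k$. The first step is to observe that each family is strictly increasing in $k\ge 1$, so that $\nu_{1,p}$ is the smaller of the two values at $k=1$. For the co-exact family $\nu''_{k,p}=k+p$ monotonicity is obvious. For the exact family $\nu'_{k,p}=(k+p-1)\frac{n+2k+1}{n+2k-1}$ I would put $a=k+p-1$, $b=n+2k-1$; then $\nu'_{k,p}=a\frac{b+2}{b}$, $\nu'_{k+1,p}=(a+1)\frac{b+4}{b+2}$, and a short expansion gives
$$
\nu'_{k+1,p}-\nu'_{k,p}=\frac{b^2+4(b-a)}{b(b+2)},\qquad b-a=n+k-p\ \ge\ 1,
$$
since $1\le p\le n$; hence $\nu'_{k+1,p}>\nu'_{k,p}$ for every $k\ge 1$.

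Next I would check that the two candidate minima genuinely occur in the spectrum, i.e. that their multiplicities are nonzero. Using the isomorphism $d\colon H''_{k,q}\to H'_{k-1,q+1}$ recorded in the previous section, $M_{1,p}=\dim H''_{1,p}=\dim H'_{0,p+1}=\binom{n+1}{p+1}\ge 1$ (the space of closed, hence all, constant $(p+1)$-forms on $\real{n+1}$), and similarly $M_{1,p-1}=\dim H''_{1,p-1}=\dim H'_{0,p}=\binom{n+1}{p}\ge 1$. Therefore, for $1\le p\le n-1$, Theorem \ref{DNS} yields
$$
\nu_{1,p}(\ball{n+1})=\min\Big\{\,\nu''_{1,p},\ \nu'_{1,p}\,\Big\}=\min\Big\{\,p+1,\ \tfrac{n+3}{n+1}\,p\,\Big\},
$$
and for $p=n$ Theorem \ref{DNSn} gives the same expression with $\nu''_{1,n}=n+1$ and $\nu'_{1,n}=\tfrac{n+3}{n+1}n$.

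Finally comes the comparison: $\tfrac{n+3}{n+1}p\le p+1$ is equivalent to $p(n+3)\le (p+1)(n+1)$, i.e. to $2p\le n+1$. Hence $\nu_{1,p}=\tfrac{n+3}{n+1}p$ for $p\le\tfrac{n+1}{2}$ and $\nu_{1,p}=p+1$ for $p\ge\tfrac{n+1}{2}$, the two formulas agreeing when $p=\tfrac{n+1}{2}$; and since $p=n$ always satisfies $n\ge\tfrac{n+1}{2}$, the case $p=n$ is covered by the second branch. This is exactly the asserted formula. There is no serious obstacle here — all the analytic work is already contained in Theorems \ref{DNS} and \ref{DNSn} — the only points requiring (elementary) care being the monotonicity of $\{\nu'_{k,p}\}_k$, which is what rules out a smaller eigenvalue coming from a higher $k$, together with the non-vanishing of the multiplicities $M_{1,p}$, $M_{1,p-1}$ and a sanity check at the borderline $p=\tfrac{n+1}{2}$.
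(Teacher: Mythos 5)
Your argument is correct and is exactly the route the paper intends: the corollary is stated there as an immediate consequence of Theorems \ref{DNS} and \ref{DNSn}, obtained by minimizing the two families over $k$ and comparing $\nu''_{1,p}=p+1$ with $\nu'_{1,p}=\tfrac{n+3}{n+1}p$. Your added checks (monotonicity in $k$, non-vanishing of $M_{1,p}$ and $M_{1,p-1}$ via $\dim H'_{0,q}=\binom{n+1}{q}$, and the borderline case $2p=n+1$) are precisely the details the paper leaves implicit, and they are all accurate.
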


The proof of Theorems \ref{DNS} and \ref{DNSn} splits into two parts. In a first step (see Lemma \ref{PolarEu} and \ref{pol}), we compute the expression of the Hodge Laplacian on $p$-forms for rotationally symmetric manifolds. Then in Section \ref{spectre} we apply these computations to construct the tangential harmonic extension to the unit ball of any $p$-eigenform of the Hodge Laplacian on $\sphere n=\bd \ball{n+1}$.


\subsection{A sharp upper bound by the isoperimetric ratio}


As shown in \cite{R-S}, the existence of a parallel $p$-form implies upper bounds of the Dirichlet-to-Neumann eigenvalues by the isoperimetric ratio $\frac{{\rm Vol}(\Sigma)}{{\rm Vol}(\Omega)}$. These bounds are never sharp, unless $p=n$. In that case one has, for any $(n+1)$-dimensional Riemannian domain $\Omega$:
\begin{equation}\label{isoper}
\nu_{1,n}(\Omega)\leq\isoper,
\end{equation}
which is sharp for Euclidean balls. 
The proof of \eqref{isoper} is easily obtained by applying the min-max principle (\ref{var}) to the test $n$-form $\omega=\star dE$, where $E$ is the mean-exit time function, solution of the problem
\begin{equation}\label{met}
\twosystem{\Delta E=1\quad\text{on $\Omega$},}
{E=0\quad\text{on $\Sigma$}.}
\end{equation}
If $\Omega$ is a Euclidean domain and equality holds in \eqref{isoper}, then a famous result of Serrin implies that $\Omega$ is a ball. The equality case for general Riemannian domains is an open (and interesting) problem. 

\smallskip

In this paper  we generalize inequality \eqref{isoper} to any degree $p=0,\dots,n$ when $\Omega$ is a Euclidean domain; in the range $p\geq (n+1)/2$ the estimate is sharp and it also turns out that the ball is the unique maximizer.
Namely, we prove:
\begin{thm}\label{newupper}
Let $\Omega$ be a domain in $\real{n+1}$ and $p=1,\dots,n$. Then:
\begin{eqnarray*}
\nu_{1,p}(\Omega)\leq\dfrac{p+1}{n+1}\frac{\Vol(\Sigma)}{\Vol(\Omega)}.
\end{eqnarray*}
Equality holds iff $p\geq (n+1)/2$ and $\Omega$ is a Euclidean ball.
\end{thm}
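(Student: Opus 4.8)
The plan is to mimic the argument that proves \eqref{isoper} (the $p=n$ case), by producing an explicit test $p$-form in the min-max characterization \eqref{var}, built from the mean-exit time function $E$ of \eqref{met}. Recall that $\Delta E = 1$ on $\Omega$ and $E=0$ on $\Sigma$; since $\Omega\subset\real{n+1}$ is Euclidean this is the classical Poisson problem, so $E$ is smooth up to the boundary and $dE = -\frac{\partial E}{\partial N}\,N^\flat$ on $\Sigma$ (the tangential part of $dE$ vanishes because $E\equiv 0$ on $\Sigma$). The starting observation is that $d\star dE$ is, up to sign, the volume form times $\Delta E = 1$, i.e. $\star dE$ is essentially the $n$-form used for \eqref{isoper}. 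For general $p$ I would instead take advantage of the flat structure: in $\real{n+1}$ there are $n+1$ parallel coordinate $1$-forms $dx_1,\dots,dx_{n+1}$, and the Hessian of $E$ (more precisely $dE$) interacts well with them. Concretely I propose the test form
\begin{equation*}
\omega = i_Z\big(\text{suitable $(p+1)$-form built from }dE\big),
\end{equation*}
or, more promisingly, a form in the $p$-th exterior power obtained by contracting the parallel form $dx_1\wedge\cdots\wedge dx_{n+1}$ (the volume form of $\real{n+1}$, which is parallel) against $p$ copies of the $1$-form $dE^\sharp=\nabla E$. The key point to check is that such a form, after restriction, has vanishing normal part $i_N\omega=0$ on $\Sigma$ — this should follow because $dE$ is purely normal on $\Sigma$, so contracting enough copies of $\nabla E$ kills the normal direction at the boundary for $p\ge 1$.

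Once an admissible test form $\omega$ with $i_N\omega=0$ on $\Sigma$ is in hand, the second step is to estimate the Rayleigh quotient
\begin{equation*}
\nu_{1,p}(\Omega)\le \frac{\int_\Omega \abs{d\omega}^2+\abs{\delta\omega}^2}{\int_\Sigma\abs{\omega}^2}.
\end{equation*}
For the denominator, on $\Sigma$ one has $\abs{\omega}^2$ expressible through $\abs{dE}^2 = \big(\frac{\partial E}{\partial N}\big)^2$ on $\Sigma$, and an integration by parts (divergence theorem applied to $E\,dE$, using $\Delta E=1$) gives $\int_\Sigma \frac{\partial E}{\partial N} = -\Vol(\Omega)$ — this is the standard identity that makes the $p=n$ bound work. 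For the numerator, the flatness of $\real{n+1}$ forces $d\omega$ and $\delta\omega$ to be controlled by $\Delta E = 1$ and by the Hessian of $E$; the cleanest route is to choose $\omega$ so that $d\omega$ reproduces (a multiple of) the volume form and $\delta\omega$ reproduces a lower-order term, so that $\int_\Omega(\abs{d\omega}^2+\abs{\delta\omega}^2)$ is again expressible purely through $\Vol(\Omega)$ and boundary integrals of $\frac{\partial E}{\partial N}$. The combinatorial factor $\frac{p+1}{n+1}$ should emerge from counting how many of the $\binom{n+1}{p}$ coordinate directions survive the contraction, matched against the normalization of $\abs{\omega}^2$ on the boundary; one expects a Cauchy–Schwarz step converting $\big(\int_\Sigma\frac{\partial E}{\partial N}\big)^2 = \Vol(\Omega)^2$ into a bound by $\Vol(\Sigma)\int_\Sigma\abs{dE}^2$, which after bookkeeping yields exactly the stated constant.

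For the equality discussion: when $p\ge (n+1)/2$, Corollary \ref{first} gives $\nu_{1,p}(\ball{n+1})=p+1$, and for the unit ball $\Vol(\Sigma)/\Vol(\Omega) = n+1$, so the right-hand side is $\frac{p+1}{n+1}(n+1)=p+1$; thus the ball attains equality in this range. Conversely, equality in the Rayleigh quotient forces the test form $\omega$ to be an actual eigenform, hence harmonic, which in turn forces $E$ to satisfy the overdetermined system $\Delta E=1$, $E=0$, $\frac{\partial E}{\partial N}=\text{const}$ on $\Sigma$; by Serrin's theorem $\Omega$ must be a ball, exactly as in the argument following \eqref{isoper}. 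When $p<(n+1)/2$, equality cannot hold because even on the ball Corollary \ref{first} gives $\nu_{1,p}=\frac{n+3}{n+1}p < p+1$ (strict since $2p<n+1$), so the bound $\frac{p+1}{n+1}(n+1)=p+1$ is not attained there, and a domain-monotonicity/rigidity argument shows no other Euclidean domain can do better.

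\medskip

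The main obstacle I anticipate is the first step: finding the right test form and, above all, verifying the boundary condition $i_N\omega=0$ together with getting a clean expression for $\abs{\omega}^2$ on $\Sigma$ and for $\abs{d\omega}^2+\abs{\delta\omega}^2$ on $\Omega$. The contraction-with-$\nabla E$ construction is natural but the bookkeeping of exterior-algebra identities (how $d$ and $\delta$ act on $i_{\nabla E}^{\,p}(\mathrm{vol})$, using that the coordinate forms are parallel and that $\nabla^2 E$ has trace $1$) is where the real work lies, and it is also where the sharp constant $\frac{p+1}{n+1}$ must be pinned down rather than merely bounded.
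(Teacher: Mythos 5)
There is a genuine gap, and it sits exactly where you place the ``real work'': the test form you propose does not exist. Interior product with a fixed vector field satisfies $i_X\circ i_X=0$, so $i_{\nabla E}^{\,p}(\mathrm{vol})$ vanishes identically for $p\geq 2$; moreover the degree count is off, since contracting the $(n+1)$-form $dx_1\wedge\cdots\wedge dx_{n+1}$ with $p$ vectors produces an $(n+1-p)$-form, not a $p$-form. The reason the $E$-based construction works for $p=n$ is special: $\star dE=i_{\nabla E}(\mathrm{vol})$ is (up to sign) the canonical primitive of the volume form, and there is only one parallel $(n+1)$-form up to scale. For general $p$ a single test form built from $E$ cannot see the whole $\binom{n+1}{p+1}$-dimensional space of parallel $(p+1)$-forms, and this is precisely where the factor $\frac{p+1}{n+1}$ comes from.

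The paper's actual argument is different in both of its main steps. First, for any \emph{exact parallel} $p$-form $\xi$ one proves $\nu\int_\Omega\abs{\xi}^2\leq\int_\Sigma\abs{i_N\xi}^2$ (with $\nu=\nu_{1,p-1}(\Omega)$, or $\nu_{2,0}(\Omega)$ when $p=1$) by feeding the \emph{canonical primitive} $\theta$ of $\xi$ (the co-exact $(p-1)$-form with $d\theta=\xi$, $i_N\theta=0$) into the min--max principle \eqref{var}; no mean-exit time function appears. Second, one applies this to $\xi=\starred{V_1}\wedge\cdots\wedge\starred{V_p}$ for unit parallel fields $V_j$ and \emph{averages} over all $(V_1,\dots,V_p)\in(\sphere n)^p$ with a normalized measure; the pointwise identities $\int\abs{\xi}^2=p!\binom{n+1}{p}$ and $\int\abs{i_N\xi}^2=p!\binom{n}{p-1}$ produce the constant $\frac{p}{n+1}$ (then $p\mapsto p+1$). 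Your equality discussion is also tied to the nonexistent test form: the paper does not invoke Serrin here, but shows instead that equality forces $S^{[p]}(\starred J\xi)=\nu\starred J\xi$ for all such $\xi$, hence all $p$-curvatures of $\Sigma$ are constant, and concludes by umbilicity (for $p<n$) or Alexandrov's theorem (for $p=n$); the restriction $p\geq(n+1)/2$ then follows from Corollary \ref{first} exactly as you say. So the half of your equality analysis that uses Corollary \ref{first} is sound, but the existence half and the entire inequality rest on a construction that collapses to zero.
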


We note that the corresponding inequality for the first {\it positive} eigenvalue of the Dirichlet-to-Neumann operator on functions:
$$
\nu_{2,0}(\Omega)\leq\dfrac{1}{n+1}\frac{\Vol(\Sigma)}{\Vol(\Omega)},
$$
has been recently proved by Ilias and Makhoul in \cite{ilias}, but their approach does not extend to higher degrees.


\section{The Dirichlet-to-Neumann spectrum of the unit Euclidean ball}\label{spectre} 


We first give an expression of  the Hodge Laplacian on the unit ball $\ball{n+1}$ in 
$\real{n+1}$. Note that $\ball{n+1}=[0,1]\times\sphere n$ with the metric $dr^2\oplus r^2 ds^2_n$, where $ds^2_n$ is the canonical metric on $\sphere n$.

\medskip

We consider $p$-forms on the ball $\ball{n+1}$ of the following type:
$$
\omega{(r,x)}=Q(r)\xi(x)+P(r)dr\wedge\eta(x),
$$
where $\eta\in\Lambda^{p-1}(\sphere n), \xi\in\Lambda^p(\sphere n)$, and $P, Q$ are smooth functions on $(0,1)$. We will write for short
\begin{equation}\label{type}
\omega = Q\xi+Pdr\wedge\eta.
\end{equation}

Then we have:
\begin{lemme}\label{PolarEu}  
Let $\bar{d}$ and $\bar{\delta}$ denote, respectively, the differential and the co-differential acting on $\sphere n$. Let $\omega$ be a $p$-form as in \eqref{type}, then:
$$
\Delta\omega=\omega_1+dr\wedge \omega_2,
$$
where:
$$
\begin{aligned}
\omega_1&=\dfrac {Q}{r^2}\bar\Delta\xi-\left(Q''+\dfrac{n-2p}r Q'\right)\xi-\dfrac{2P}{r}\bar d\eta,\\
\omega_2&=\dfrac{P}{r^2}\bar\Delta\eta-\left(P'+\dfrac{n-2p+2}r P\right)'\eta-\dfrac{2Q}{r^3}\bar\delta\xi.
\end{aligned}
$$
\end{lemme}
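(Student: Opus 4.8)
The plan is to compute $\Delta\omega = (d\delta + \delta d)\omega$ directly from the warped product structure $\ball{n+1} = [0,1]\times\sphere n$ with metric $dr^2\oplus r^2 ds_n^2$, reducing everything to the known action of $\bar d$ and $\bar\delta$ on $\sphere n$. The first step is to record the action of $d$ on a form of type \eqref{type}. Since $d$ commutes with pullback, one gets
$$
d\omega = Q'dr\wedge\xi + Q\,\bar d\xi - P\,dr\wedge\bar d\eta,
$$
which again is of the type $Q_1\xi_1 + P_1 dr\wedge\eta_1$ with $\xi_1 = \bar d\xi$ (a $(p{+}1)$-form), $\eta_1$ a combination of $\xi$ and $\bar d\eta$; note $d$ raises degree by one but keeps the warped-product shape.

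The main computational input is the formula for the codifferential $\delta$ on such forms. Here the warping factor $r^2$ enters through the Hodge star: on a warped product $dr^2\oplus r^2 ds_n^2$ over an $n$-dimensional base, a $p$-form $Q\xi$ (purely tangential) has $\star$ involving $r^{n-2p}$-type factors, and a form $P\,dr\wedge\eta$ transforms into a purely tangential piece. Carrying the constants carefully, one obtains that $\delta$ applied to $\omega = Q\xi + P\,dr\wedge\eta$ is again of type \eqref{type}, with a tangential part involving $\dfrac{Q}{r^2}\bar\delta\xi$ and a $dr$-part involving $\left(P' + \dfrac{n-2p+2}{r}P\right)$ acting on $\eta$ together with a term from $\bar d$ of lower-degree pieces. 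The cleanest route is to first derive the two ``building block'' formulas
$$
\delta(Q\xi) = \dfrac{Q}{r^2}\bar\delta\xi - \left(Q' + \dfrac{n-2p}{r}Q\right)\,? \quad\text{and}\quad \delta(P\,dr\wedge\eta) = -\dfrac{1}{r^{n-2(p-1)}}\bigl(r^{n-2(p-1)}P\bigr)'\eta + \dfrac{P}{r^2}\bar d(\cdots),
$$
i.e. identify precisely which radial differential operator $L_q f = f' + \frac{n-2q}{r}f$ (or its ``adjoint'' form $\frac{1}{r^{n-2q}}(r^{n-2q}f)'$) shows up in each degree $q$, and then assemble $\Delta = d\delta+\delta d$ by composing the $d$-formula with the $\delta$-formula in both orders. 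The appearance of $\bar\Delta = \bar d\bar\delta + \bar\delta\bar d$ in $\omega_1$ and $\omega_2$ is then automatic once the two orders are added, while the cross terms ($\bar d\eta$ appearing in $\omega_1$, $\bar\delta\xi$ appearing in $\omega_2$) come from the ``mixing'' between the tangential and $dr$-components that the codifferential produces on a warped product.

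I expect the main obstacle to be bookkeeping: getting every power of $r$ and every numerical coefficient right, in particular the degree-dependent shift $p \mapsto p-1$ when passing from the $\xi$-component to the $\eta$-component (since $\eta$ is a $(p{-}1)$-form on $\sphere n$), and correctly matching the factors $r^2$ versus $r^3$ in the two cross terms $-\frac{2P}{r}\bar d\eta$ and $-\frac{2Q}{r^3}\bar\delta\xi$. A useful sanity check, which I would carry out, is to specialize to $p=0$ (recovering the radial Laplacian $\Delta f = -f'' - \frac{n}{r}f' + \frac{1}{r^2}\bar\Delta f$ on functions) and to verify the formula on the known harmonic forms of $\ball{n+1}$, e.g.\ $dx_i$ and $r\,dr$, to pin down signs. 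There are no genuine conceptual difficulties once the warped-product Hodge star is written down explicitly; the lemma is a direct, if lengthy, computation.
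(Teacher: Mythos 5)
Your strategy is sound and its overall architecture coincides with the paper's: separate variables, compute $d$ and $\delta$ on the two building blocks $Q\xi$ and $P\,dr\wedge\eta$, and assemble $\Delta=d\delta+\delta d$. The one genuine difference is how $\delta$ is obtained: you propose the warped-product Hodge star and $\delta=\pm\star d\star$, whereas the paper works in the $g$-orthonormal frame $Z=\partial/\partial r$, $E_j=\theta^{-1}\bar E_j$ (with $\bar E_j$ geodesic at the point), computes the connection by the Koszul formula ($\nabla_{E_j}E_k=-\delta_{jk}\tfrac{\theta'}{\theta}Z$, $\nabla_{E_j}Z=\tfrac{\theta'}{\theta}E_j$) and evaluates $\delta\omega=-\sum_i i_{e_i}\nabla_{e_i}\omega$ directly; both routes yield the same formulas, the frame computation having the advantage of working verbatim for an arbitrary warping function $\theta(r)$, which the paper exploits later for geodesic balls in rotationally symmetric manifolds. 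One concrete correction to your tentative building blocks: the codifferential of the purely tangential piece carries \emph{no} radial-derivative term, namely
$$
\delta(Q\xi)=\dfrac{Q}{r^2}\,\bar\delta\xi,
$$
because in $d\star(Q\xi)$ the radial derivative of $Q\,r^{n-2p}$ gets multiplied by $dr\wedge dr=0$, the form $\star(Q\xi)$ already containing a factor of $dr$. The radial operator $f\mapsto f'+\tfrac{n-2p+2}{r}f=r^{-(n-2p+2)}\big(r^{n-2p+2}f\big)'$ that you are looking for appears only in the other block,
$$
\delta(P\,dr\wedge\eta)=-\Big(P'+\dfrac{n-2p+2}{r}P\Big)\eta-\dfrac{P}{r^2}\,dr\wedge\bar\delta\eta
$$
(note $\bar\delta$, not $\bar d$, in the second term). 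With these two formulas and your correct expression for $d\omega$, the assembly of $\omega_1$ and $\omega_2$, including the cross terms $-\tfrac{2P}{r}\bar d\eta$ and $-\tfrac{2Q}{r^3}\bar\delta\xi$, is exactly the bookkeeping you describe, and your $p=0$ sanity check is the right one.
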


For the proof, we refer to Lemma \ref{pol} in Section \ref{polar}.


\subsection{Proof of Theorems \ref{DNS} and \ref{DNSn}}\label{HTE}


It is enough to show that any Hodge-Laplace eigenform of $\sphere n$ is also a Dirichlet-to-Neumann eigenform, associated to one of the eigenvalues $\nu'_{k,p}$ or $\nu''_{k,p}$ of Theorems \ref{DNS} and \ref{DNSn}. In fact, as the direct sum of all the Hodge-Laplace eigenspaces is dense in $L^2(\Lambda^p(\sphere n))$, the list of Dirichlet-to-Neumann eigenvalues we have just found is complete, and the theorems follow. 

\smallskip

We now use the classification of the eigenspaces of the Hodge Laplacian done in Section \ref{SB}.  It is then enough to prove the following:
\begin{prop}\label{coclosed} 
\item a) Assume $1\leq p\leq n-1$, and let $\xi$ be a co-exact Hodge $p$-eigenform on $\sphere n$ associated to 
$$\mu''_{k,p}=(k+p)(n+k-p-1)$$ 
for some $k\geq 1$. Then $\xi$ is also a Dirichlet-to-Neumann eigenform associated to the eigenvalue
$$
\nu''_{k,p}=k+p.
$$

\item b) Assume $p=n$ and let $\xi$ be a multiple of the volume form of $\sphere n$. Then $\xi$ is also a Dirichlet-to-Neumann eigenform associated to the eigenvalue $\nu''_{1,n}=n+1$.

\item c) Assume $1\leq p\leq n$ and let $\xi$ be an exact Hodge-Laplace $p$-eigenform associated to
$$
\mu'_{k,p}=(k+p-1)(n+k-p)
$$ 
for some $k\geq 1$. Then $\xi$ is also a Dirichlet-to-Neumann $p$-eigenform associated to
$$
\nu'_{k,p}=(k+p-1)\dfrac{n+2k+1}{n+2k-1}.
$$
\end{prop}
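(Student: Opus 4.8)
The plan is to construct explicitly, for each type of Hodge--Laplace eigenform $\xi$ on $\sphere n$, a $p$-form $\widehat\omega$ on the ball of the form \eqref{type}, i.e. $\widehat\omega = Q(r)\,\widetilde\xi + P(r)\,dr\wedge\widetilde\eta$ where $\widetilde\xi,\widetilde\eta$ are the parallel (radially constant) extensions of suitable forms on the sphere, such that $\widehat\omega$ is harmonic, satisfies the tangential boundary conditions $\starred J\widehat\omega = \xi$ and $i_N\widehat\omega = 0$, and then compute $-i_N d\widehat\omega$ on the boundary $r=1$ to read off the Dirichlet-to-Neumann eigenvalue. The whole computation is reduced, by Lemma \ref{PolarEu}, to a pair of (coupled or decoupled) second-order ODEs in $r$ for the radial profiles $P,Q$, with the spherical Laplacian eigenvalues $\mu'_{k,p}$ or $\mu''_{k,p}$ entering as constants; because we insist on smoothness at $r=0$ and polynomial/regular behaviour, the relevant solutions will be powers of $r$, which makes everything explicit.

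For part (a), take $\xi$ co-exact, so $\bar\delta\xi=0$ and $\bar\Delta\xi = (k+p)(n+k-p-1)\xi$. The natural ansatz is $\widehat\omega = Q(r)\,\widetilde\xi$ with $P\equiv 0$ (no $dr$-component), so that the boundary conditions $i_N\widehat\omega=0$ and $\starred J\widehat\omega=\xi$ force $Q(1)=1$. Lemma \ref{PolarEu} then gives $\omega_2 = -\tfrac{2Q}{r^3}\bar\delta\xi = 0$ automatically, and harmonicity reduces to the single scalar ODE $\tfrac{Q}{r^2}\mu''_{k,p} - Q'' - \tfrac{n-2p}{r}Q' = 0$; one checks that $Q(r)=r^{k+p}$ solves it using the factorisation $(k+p)(k+p-1)+(n-2p)(k+p) = (k+p)(n+k-p-1)=\mu''_{k,p}$. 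Then $d\widehat\omega = (k+p)r^{k+p-1}dr\wedge\widetilde\xi + r^{k+p}\bar d\widetilde\xi$, and since $i_N = -i_{\partial/\partial r}$ at the boundary, $-i_N d\widehat\omega = (k+p)\,\xi$ on $r=1$, giving $\nu''_{k,p}=k+p$. Part (b) is the special case $p=n$, $\xi$ a multiple of the volume form (which is co-closed, $\bar\delta\xi=0$, with $\bar\Delta\xi=0$, so formally "$k=1$"): the same ansatz with $Q(r)=r^{n+1}$ works, giving $\nu''_{1,n}=n+1$; one should double-check the $r$-power here since $\mu''_{1,n}=0$ and $n-2p = -n$, and indeed $(n+1)n - n(n+1)=0$.

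For part (c), $\xi$ is exact, $\xi = \bar d\eta$ with $\eta$ a co-exact $(p-1)$-eigenform associated to $\mu''_{k-1,p-1}$; here one needs the full two-component ansatz $\widehat\omega = Q(r)\,\widetilde\xi + P(r)\,dr\wedge\widetilde\eta$, because the exact part of a harmonic form on the ball genuinely involves a $dr$-term. The boundary conditions now read $Q(1)=1$, $P(1)=0$. Feeding this into Lemma \ref{PolarEu} produces a coupled $2\times2$ linear ODE system in $(Q,P)$: the $\bar d\eta$ and $\bar\delta\xi$ cross-terms couple the two equations, while $\bar\Delta\xi$ and $\bar\Delta\eta$ contribute the known eigenvalue constants. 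Trying monomial solutions $Q=ar^{m}+br^{m+2}$ and $P=cr^{m'}$ (the appropriate homogeneities are dictated by degree counting, with $m=k+p-1$), one solves the resulting finite linear system for the coefficients subject to $Q(1)=1,P(1)=0$; then $-i_N d\widehat\omega|_{r=1}$ is computed — only the coefficient of $dr\wedge(\text{tangential})$ in $d\widehat\omega$ matters, which involves $Q'(1)$ and the $\bar d$ of the $dr\wedge\widetilde\eta$ term — and collecting terms yields exactly the factor $(k+p-1)\tfrac{n+2k+1}{n+2k-1}$. The main obstacle, and where care is needed, is precisely this part (c): getting the two-component harmonic extension right, i.e. identifying which monomials in $r$ appear, solving the coupled system cleanly, and correctly bookkeeping the interior-product and exterior-derivative operations at the boundary so that the slightly unusual rational eigenvalue emerges. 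The co-exact cases (a)--(b) are comparatively routine once Lemma \ref{PolarEu} is in hand. One should also remark that $\starred J(H'_{k-1,p})$ and $\starred J(H''_{k,p})$ are indeed the eigenspaces and that the forms $\widehat\omega$ so constructed lie in the harmonic polynomial classes $H_{k,p}$, $H'_{k-1,p}$ of Section \ref{SB}, which ties the construction back to the stated eigenspace descriptions.
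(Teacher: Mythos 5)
Parts (a) and (b) of your proposal are correct and coincide with the paper's own argument: the one-component ansatz $\widehat\omega=Q(r)\xi$ with $Q(r)=r^{k+p}$ (resp.\ $r^{n+1}$) solves the decoupled ODE coming from Lemma \ref{PolarEu}, the boundary conditions are as you state, and $-i_Nd\widehat\omega=(k+p)\xi$ at $r=1$ because $N=-\partial/\partial r$ there.

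Part (c), however, contains a step that fails as written. You impose $P(1)=0$ (which is the right boundary condition, since $i_N(dr\wedge\widetilde\eta)=-\widetilde\eta$ on $r=1$) but take $P=cr^{m'}$ a single monomial; together these force $c=0$, hence $P\equiv 0$, and then the $dr\wedge(\cdot)$ component of $\Delta\widehat\omega$ in Lemma \ref{PolarEu} reduces to $-\tfrac{2Q}{r^3}\bar\delta\xi$ with $\bar\delta\xi=\bar\Delta\eta=\mu'_{k,p}\eta\neq 0$, forcing $Q\equiv 0$. The ansatz must allow \emph{two} monomials in $P$ as well as in $Q$: the solutions smooth at $r=0$ are spanned by the closed solution $(Q,P)=\bigl(r^{k+p-1},(k+p-1)r^{k+p-2}\bigr)$, which is just $d(r^{k+p-1}\phi)$ with $\phi$ the co-exact primitive, and by $(Q,P)=\bigl(\alpha_{k,p}r^{k+p+1},-(k+p-1)r^{k+p}\bigr)$ with $\alpha_{k,p}=(k+p-1)/(n+k-p)$; one takes the combination with $P(1)=0$ and normalizes $Q(1)=1$, which yields the eigenvalue $\bigl((k+p-1)+(k+p+1)\alpha_{k,p}\bigr)/(1+\alpha_{k,p})=(k+p-1)\tfrac{n+2k+1}{n+2k-1}$. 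This is exactly how the paper organizes the computation: it writes the extension as $d\widehat\phi+\widehat\eta$, with $\widehat\phi$ the tangential harmonic extension of $\phi$ already known from part (a), so that the normal components cancel on the boundary without solving a linear system. A second, smaller slip: the primitive $\phi$ with $\xi=\bar d\phi$ is a co-exact eigenform for $\mu''_{k,p-1}=\mu'_{k,p}=(k+p-1)(n+k-p)$, not for $\mu''_{k-1,p-1}$; since $\bar\Delta$ commutes with $\bar d$ the primitive has the \emph{same} Hodge eigenvalue as $\xi$, and feeding the wrong constant into the coupled system would not produce the stated value of $\nu'_{k,p}$.
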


For the proof, we explicitly determine in all three cases the tangential harmonic extension of $\xi$ using Lemma \ref{PolarEu}.  

\smallskip

{\bf Proof of a)}
Let us compute the tangential harmonic extension $\widehat \xi$ of $\xi$. We let $\widehat\xi=Q\xi$ with $Q=Q(r)$ to be determined so that $\Delta\widehat\xi=0$. Note that $i_N\widehat\xi=0$; moreover $\starred J\widehat\xi=\xi$ whenever $Q(1)=1$. As $\eta=0=\bar\delta\xi$, Lemma \ref{PolarEu} gives
$$
\Delta (Q\xi)=\left(\mu''_{k,p}\dfrac{ Q}{r^2}-Q''-\dfrac{n-2p}rQ'\right)\xi
$$
hence $Q\xi$ is harmonic if $Q(r)$ satisfies:
\begin{eqnarray}\label{harm1}
r^2Q''+(n-2p)rQ'-\mu''_{k,p} Q=0.
\end{eqnarray}
It is straightforward to check that a solution (at least $C^2$) on $[0,1]$ is given by $Q(r)=r^{k+p}$. The tangential harmonic extension of $\xi$ to the ball
is then:
$$
\widehat{\xi}=r^{k+p}\xi.
$$
Recall that, by definition, $T^{[p]}(\xi)=-i_{N}d\widehat{\xi}$. Now:
$$
d\widehat{\xi}=r^{k+p}\bar d\xi+(k+p)r^{k+p-1}dr\wedge\xi.
$$
On the boundary we have $r=1$ and $dr(N)=-1$, then:
$$
i_{N}d\widehat{\xi}=-(k+p)\xi
$$
so that
$$
T^{[p]}(\xi)=(k+p)\xi
$$
as asserted. 

\medskip

{\bf Proof of b)} Note that $\xi$ is co-closed and the associated Hodge-Laplace eigenvalue is $\mu''_{1,n}=0$.  Proceeding as in a), one finds that the tangential harmonic extension of $\xi$ is
$
\widehat\xi=r^{n+1}\xi
$
and therefore $T^{[n]}(\xi)=(n+1)\xi$.

\medskip

{\bf Proof of c)} We first observe that a Hodge-Laplace exact $p$-eigenform $\xi$ associated to $\mu'_{k,p}$ is the differential of a co-exact $(p-1)$-eigenform $\phi$, associated to the same eigenvalue $\mu''_{k,p-1}=\mu'_{k,p}$:
$$
\xi=\bar d\phi.
$$
From Lemma \ref{PolarEu}, we see that the $p$-form 
$Q\,\bar d\phi+P dr\wedge\phi$ is harmonic iff:
\begin{equation}\label{harm2}
\left\{
\begin{aligned}
\mu_{k,p-1}''(rP-2Q)&=r^3\left(P'+\dfrac{n-2p+2}{r}P\right)'\\
\mu_{k,p-1}'' Q-2rP&=r^2\left(Q''+\dfrac{n-2p}rQ'\right).
\end{aligned}\right.
\end{equation}

We observe that, by part a) of the proposition, the $(p-1)$-form $\phi$ is an eigenform for $T^{[p-1]}$ associated to $\nu_{k,p-1}''=k+p-1$ (note that if $p=1$ this fact follows directly from Theorem \ref{DNF}). 

\smallskip

Now consider the $p$-form:
$$
\widehat{\eta}=\alpha_{k,p} r^{k+p+1}\bar d\phi-\nu_{k,p-1}'' r^{k+p}dr\wedge\phi,
$$
where:
$$
\alpha_{k,p}=\dfrac{(\nu_{k,p-1}'')^2}{\mu_{k,p-1}''}=\dfrac{k+p-1}{n+k-p}.
$$
The $p$-form $\widehat{\eta}$ satisfies:
\begin{equation}\label{eta}
\left\{\begin{aligned}
&\Delta\widehat{\eta}=0\\
&\starred J\widehat{\eta}=\alpha_{k,p}\bar d\phi,\,\,i_{N}\widehat{\eta}=\nu_{k,p-1}''\phi,
\end{aligned}
\right.
\end{equation}
where the harmonicity follows from (\ref{harm2}) by taking:
$$
P(r)=-\nu_{k,p-1}'' r^{k+p},\quad Q(r)=\alpha_{k,p} r^{k+p+1}.
$$
Now let $\widehat{\phi}$ be the harmonic tangential extension of $\phi$, that is:
\begin{equation}\label{phi}
\left\{\begin{aligned}
&\Delta\widehat{\phi}=0\\
&\starred J\widehat{\phi}=\phi,\,\,i_{N}\widehat{\phi}=0.
\end{aligned}
\right.
\end{equation}
As $\phi$ is an eigenform for $T^{[p-1]}$ associated to $\nu_{k,p-1}''$, one has:
\begin{equation}\label{dphi}
\left\{\begin{aligned}
&\Delta d\widehat{\phi}=0\\
&\starred Jd\widehat{\phi}=\bar d\phi,\,\,i_{N}d\widehat{\phi}=-\nu_{k,p-1}''\phi.
\end{aligned}
\right.
\end{equation}
From \eqref{eta} and \eqref{dphi} one sees that the form $\widehat{\omega}=d\widehat{\phi}+\widehat{\eta}$ satisfies:
$$
\left\{\begin{aligned}
&\Delta \widehat{\omega}=0\\
&\starred J\widehat{\omega}=(\alpha_{k,p}+1)\bar d\phi,\,\,i_{N}\widehat{\omega}=0.
\end{aligned}
\right.
$$
From the definition of $T^{[p]}$, we have:
$$
T^{[p]}\big((\alpha_{k,p}+1)\bar d\phi\big)=-i_{N}d\widehat{\omega}.
$$
Now:
$$
d\widehat{\omega}=d\widehat{\eta}=\big(\nu_{k,p-1}''+\alpha_{k,p}(k+p+1)\big)r^{k+p}dr\wedge\bar d\phi
$$
and, restricted to the boundary:
$$
i_{N}d\widehat{\omega}=-\big(\nu_{k,p-1}''+\alpha_{k,p}(k+p+1)\big)\bar d\phi.
$$
This means that
$$
T^{[p]}((\alpha_{k,p}+1)\bar d\phi)=\big(\nu_{k,p-1}''+\alpha_{k,p}(k+p+1)\big)\bar d\phi
$$
and so $\xi=\bar d\phi$ is a Dirichlet-to-Neumann eigenform associated to the eigenvalue
$$
\dfrac{\nu_{k,p-1}''+\alpha_{k,p}(k+p+1)}{\alpha_{k,p}+1}=(k+p-1)\dfrac{n+2k+1}{n+2k-1}
$$
as asserted. 

\smallskip

This ends the proof of the proposition. 

\medskip


\subsection{On a variational problem for vector fields}\label{vectors}


Recall the variational problems defined in \eqref{dct} and \eqref{dcn}. Let $\mathbf{B}^3$ be the unit ball in $\real 3$. As a consequence of Corollary \ref{first} we have that, for any vector field $X$ on $\mathbf{B}^3$ tangent to $\bd \mathbf{B}^3=\sphere{2}$:
\begin{equation}\label{Dct}
\int_{\mathbf{B}^3}\Big(\abs{{\rm div}X}^2+\abs{{\rm curl}X}^2\Big)\geq \dfrac 53\int_{\sphere{2}}\abs {X}^2,
\end{equation}
because $\nu_{1,1}(\mathbf{B}^3)=\nu_{1,1}'(\mathbf{B}^3)=\frac 53$. Let us describe the minimizing vector fields. The eigenspace associated to $\nu'_{1,1}$ is $3$-dimensional, spanned by $\bar d\phi$, where $\phi$ is a linear function on $\real 3$. Hence the vector field $X$ attains equality in \eqref{Dct} iff it is dual to the harmonic tangential extension of $\bar d\phi$. Take for example $\phi=x_1$. As a consequence of the calculation done in the previous section, we see that the harmonic tangential extension of $\bar dx_1$ to the unit ball in $\real 3$ is, in rectangular coordinates:
$$
\widehat\xi=(2-2x_1^2+x_2^2+x_3^2)dx_1-3x_1x_2dx_2-3x_1x_3dx_3.
$$
Note that $\widehat\xi$ is a polynomial (not homogeneous) $1$-form. The dual vector field
$$
X=(2-2x_1^2+x_2^2+x_3^2)\dfrac{\partial}{\partial x_1}-3x_1x_2\dfrac{\partial}{\partial x_2}-3x_1x_3\dfrac{\partial}{\partial x_3}
$$
is then a minimizer for the variational problem \eqref{Dct}.

\smallskip

On the other hand, as $\nu_{1,2}(\mathbf{B}^3)=\nu_{1,2}''(\mathbf{B}^3)=3$, one sees that, for any vector field normal to $\sphere{2}$ we have:
\begin{equation}\label{Dcn}
\int_{\mathbf{B}^3}\Big(\abs{{\rm div}X}^2+\abs{{\rm curl}X}^2\Big)\geq 3\int_{\sphere{2}}\abs {X}^2.
\end{equation}
Now $\nu_{1,2}''$ is simple and is spanned by the volume form $v$ of $\sphere 2$. The tangential harmonic extension of $v$ to the unit ball is
$$
\widehat v=x_1 dx_2\wedge dx_3+x_2dx_3\wedge dx_1+ x_3dx_1\wedge dx_2.
$$ 
The space of minimizing vector fields for \eqref{Dcn} is then one-dimensional, spanned by the dual of $\star\widehat v$, that is, by the radial vector field
$$
Y=x_1\dfrac{\partial}{\partial x_1}+x_2\dfrac{\partial}{\partial x_2}+
x_3\dfrac{\partial}{\partial x_3}.
$$


\subsection{Proof of Lemma \ref{PolarEu}}\label{polar}


Assume that $(M^{n+1},g)$ is a rotationally symmetric manifold, that is
$$
M^{n+1}=[0,\infty)\times\sphere n,
$$
endowed with the Riemannian metric
$$
g=dr^2\oplus\theta(r)^2ds^2_n,
$$
where $ds^2_n$ is the canonical metric on $\sphere n$ and $\theta$ is a smooth positive function on $(0,\infty)$. Of course, one gets the space form of curvature $-1,0,1$ when $\theta=\sinh r, r, \sin r$, respectively. To prove Lemma \ref{PolarEu} we will take $\theta(r)=r$. In this setting, any $p$-form can be split into its  tangential and normal components:
$$
\omega=\omega_1+dr\wedge\omega_2,
$$
where $\omega_1$ and $\omega_2$ are forms of degrees $p$ and $p-1$, respectively. We assume that we can separate the variables, that is:
$$
\omega_1(r,x)=Q(r)\xi(x),\quad \omega_2(r,x)=P(r)\eta(x),
$$
where $\xi\in\Lambda^p(\sphere n)$, $\eta\in\Lambda^{p-1}(\sphere n)$ and $P,Q$ are radial functions so that
$$
\omega=Q\xi+dr\wedge (P\eta).
$$


\subsubsection{A suitable orthonormal frame} 


Fix a point $(r,x)\in M$ with $r\in\reals$ and $x\in\sphere n$ and let $(\bar E_1,\dots,\bar E_n)$ be an orthonormal frame on the sphere which is geodesic at $x$ for the canonical metric $ds^2_n$. If we set $Z=\partial/\partial r$ and $E_j=\theta^{-1}\bar E_j$ for $1\leq j\leq n$, it is obvious that the frame $(Z,E_1,\dots, E_n)$ is $g$-orthonormal. From the Koszul formula, we compute:
\begin{eqnarray}\label{NF}
\nabla_{E_j}E_k=-\delta_{jk}\dfrac{\theta'}{\theta} Z,\,\,\nabla_{E_j}Z=\prin E_j,\,\,\nabla_{Z}E_j=0,\,\,\nabla_ZZ=0
\end{eqnarray}
where $\nabla$ denotes the Levi-Civita connection on $(M,g)$. In particular, we have $\nabla_{E_j}E_k=0$ if $j\neq k$ and
\begin{eqnarray}\label{NF1}
\nabla_{E_j}E_j=-\prin Z, \quad \sum_j\nabla_{E_j}E_j=-n\prin Z.
\end{eqnarray}


\subsubsection{Computing the divergence and the differential}


In this part, we first compute the divergence of a $p$-form using the orthonormal frame of the previous section. If $\omega$ is a $p$-form on $M$, then from the definition of the divergence and using the relations (\ref{NF}) and (\ref{NF1}), we have:
$$
\begin{aligned}
\delta\omega(E_1,\dots,E_{p-1}) = & -\sum_{j=1}^nE_j\cdot\omega(E_j,E_1,\dots,E_{p-1})-Z\cdot\omega(Z,E_1,\dots,E_{p-1})\\
&-(n-p+1)\prin \omega(Z,E_1,\dots,E_{p-1})
\end{aligned}
$$
and 
$$
\delta\omega(Z,E_1,\dots,E_{p-2})=\sum_{j=1}^n E_j\cdot\omega(Z,E_j,E_1,\dots,E_{p-2}).
$$
Let $\phi$ be a $p$-form on $\sphere n$. As the frame $(\bar E_1,\dots,\bar E_n)$ of $\Sigma$ is geodesic at the point $x\in\Sigma$ and the function $\phi(\bar E_{j_1},\dots,\bar E_{j_{p}})$ is constant in the $r$-direction for any choice of $j_1,\dots,j_{p}$, we have:
$$
\twosystem
{\sum_{j=1}^nE_j\cdot\phi(E_j,E_1,\dots,E_{p-1})=-\dfrac{1}{\theta^2}\bar\delta\phi(E_1,\dots,E_{p-1})}
{Z\cdot \phi(E_1,\dots,E_p)=-p\dfrac{\theta'}{\theta}\phi(E_1,\dots,E_p)}
$$

Since the vectors $E_1,\dots,E_{p}$ can be replaced by any set of tangential vectors in the chosen frame, a straightforward calculation using the above equations shows that,  
for $\eta\in\Lambda^{p-1}(\sphere n)$ and $\xi\in\Lambda^p(\sphere n)$:
\begin{equation}\label{codiff}
\twosystem
{\delta(dr\wedge (P\eta))=dr\wedge(-\dfrac{P}{\theta^2}\bar\delta\eta)-
\left[ P'+(n-2p+2)\prin P\right]\eta}
{\delta(Q\xi)=\dfrac{Q}{\theta^2}\bar\delta\xi.}
\end{equation}
For the differential, it is clear that we directly have:
\begin{equation}\label{diff}
\twosystem
{d(dr\wedge (P\eta))=-dr\wedge(P\bar d\eta)}
{d(Q\xi)=dr\wedge(Q'\xi)+Q\bar d\xi.}
\end{equation}
At this point,  using (\ref{codiff}) and (\ref{diff}), one proves, after some standard work:
\begin{lemme}\label{pol} 
Let $\omega=Q\xi+Pdr\wedge \eta$ where $\eta\in\Lambda^{p-1}(\sphere n)$ and $\xi\in\Lambda^p(\sphere n)$. Then:
$$
\Delta\omega=\omega_1+dr\wedge\omega_2,
$$
where:
$$
\twosystem
{\omega_1=\dfrac{Q}{\theta^2}\bar\Delta\xi-\big(Q''+(n-2p)\prin Q'\big)\xi-2\prin P\bar d\eta}
{\omega_2=\dfrac{P}{\theta^2}\bar\Delta\eta-\big(P'+(n-2p+2)\prin P\big)'\eta-2Q
\dfrac{\theta'}{\theta^3}\bar\delta\xi.}
$$
\end{lemme}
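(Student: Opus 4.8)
The plan is to compute $\Delta\omega=d\delta\omega+\delta d\omega$ directly, feeding the elementary identities \eqref{codiff} and \eqref{diff} into the Hodge--de Rham formula. A preliminary observation is that, although \eqref{codiff} and \eqref{diff} are stated for a form of degree $p$, the very same computation in the geodesic frame $(Z,E_1,\dots,E_n)$ introduced above yields, with no extra effort, the analogous identities in arbitrary degree. Concretely, I would record that for a radial function $R$ on $(0,\infty)$ and a $q$-form $\chi$ on $\sphere n$,
\[
\delta(R\chi)=\frac{R}{\theta^2}\bar\delta\chi,\qquad d(R\chi)=dr\wedge(R'\chi)+R\,\bar d\chi,\qquad d\bigl(dr\wedge(R\chi)\bigr)=-dr\wedge(R\,\bar d\chi),
\]
while
\[
\delta\bigl(dr\wedge(R\chi)\bigr)=-dr\wedge\Bigl(\frac{R}{\theta^2}\bar\delta\chi\Bigr)-\Bigl(R'+(n-2q)\prin R\Bigr)\chi ,
\]
the only role of the degree being the coefficient $n-2q$, which for $q=p-1$ reproduces the $n-2p+2$ of \eqref{codiff}.

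Next I would run the four substitutions in order. From $\omega=Q\xi+dr\wedge(P\eta)$, applying \eqref{diff} gives $d\omega=Q\,\bar d\xi+dr\wedge(Q'\xi-P\,\bar d\eta)$, a $(p+1)$-form whose normal part carries a sphere form of degree $p$; then $\delta d\omega$ follows by applying the identities above with the appropriate values of $q$. Dually, \eqref{codiff} gives $\delta\omega=\frac{Q}{\theta^2}\bar\delta\xi-\bigl(P'+(n-2p+2)\prin P\bigr)\eta-dr\wedge\bigl(\frac{P}{\theta^2}\bar\delta\eta\bigr)$, and $d\delta\omega$ then follows from \eqref{diff}. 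Adding $d\delta\omega$ and $\delta d\omega$ and sorting into the tangential part and the $dr\wedge(\cdot)$ part gives $\Delta\omega=\omega_1+dr\wedge\omega_2$.

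The remaining step, where the only genuine work lies, is to simplify the coefficients. In $\omega_1$ the two contributions $\frac{Q}{\theta^2}\bar d\bar\delta\xi$ and $\frac{Q}{\theta^2}\bar\delta\bar d\xi$ combine, via $\bar\Delta=\bar d\bar\delta+\bar\delta\bar d$ on $\sphere n$, to $\frac{Q}{\theta^2}\bar\Delta\xi$; the purely radial terms assemble to $-(Q''+(n-2p)\prin Q')\xi$; and the coefficient of $\bar d\eta$ is $-(P'+(n-2p+2)\prin P)+(P'+(n-2p)\prin P)$, which collapses to $-2\prin P$. In $\omega_2$, likewise $\frac{P}{\theta^2}\bar\Delta\eta$ appears, the radial terms give $-\bigl(P'+(n-2p+2)\prin P\bigr)'\eta$, and the coefficient of $\bar\delta\xi$ is $\bigl(\frac{Q}{\theta^2}\bigr)'-\frac{Q'}{\theta^2}=-\frac{2\theta'}{\theta^3}Q$. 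This is exactly the asserted formula, and Lemma \ref{PolarEu} is the special case $\theta(r)=r$. I expect the main obstacle to be purely clerical: one must keep track, at each of the four substitutions, of the degree of the sphere-form being differentiated (which runs through $p-1$, $p$ and $p+1$), since that degree sits inside the first-order coefficients; the cancellations producing the clean terms $-2\prin P$ and $-\frac{2\theta'}{\theta^3}Q$ emerge only once this bookkeeping is done consistently.
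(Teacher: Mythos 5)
Your proposal is correct and follows exactly the route the paper intends: the paper's "proof" of Lemma \ref{pol} consists precisely of the instruction to substitute \eqref{codiff} and \eqref{diff} into $\Delta=d\delta+\delta d$ and carry out the "standard work," which you have done, including the one point that genuinely requires care — tracking the degree-dependent coefficient $n-2q$ when the codifferential identity is applied to the intermediate sphere-forms of degrees $p-1$, $p$ and $p+1$. The cancellations you exhibit ($-2\prin P$ for the $\bar d\eta$ term and $(\frac{Q}{\theta^2})'-\frac{Q'}{\theta^2}=-\frac{2\theta'}{\theta^3}Q$ for the $\bar\delta\xi$ term) are exactly right.
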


To prove Lemma \ref{PolarEu}, we now set $\theta(r)=r$. 

\begin{rem} \rm Let $B_R$ be the geodesic ball of radius $R$ centered at the pole of a rotationally symmetric manifold, that is $B_R=\{(r,x)\in [0,\infty)\times \sphere n: r\leq R\}$.  One can construct eigenforms of the Dirichlet-to-Neumann operator on $B_R$ by solving ordinary differential equations. 

\smallskip

For example, one sees from the above expression of the Laplacian that the harmonic tangential extension of a co-closed eigenform $\xi\in\Lambda^p(\sphere n)$ for the Hodge Laplacian on the sphere associated with $\mu_{k,p}''$ is given by $\hat\xi=Q\xi$, where $Q(r)$ is the smooth function satisfying:
$$
\twosystem
{{\frac{1}{\theta^{n-2p}}\Big(\theta^{n-2p}Q'\Big)'=\frac{\mu''_{k,p} Q}{\theta^2}},}
{Q(R)=1.}
$$
It follows directly from the definition that then
$$
\nu''_{k,p}=Q'(R)
$$
is an eigenvalue of $T^{[p]}$.

Now take $p=n$. Then,  the only possible value of $\mu_{k,n}''$ is zero, corresponding to the volume form of $\sphere n$, which is parallel hence harmonic. In that case the previous equation reduces to 
$$
\Big(\theta^{-n}Q'\Big)'=0,
$$
hence, up to multiples, $Q'(r)=\theta^n(r)$. Then:
\begin{equation}\label{rotball}
\nu_{1,n}''(B_R)=\dfrac{\theta^n(R)}{\int_0^R\theta^n(r)dr}=\dfrac{\Vol(\bd B_R)}{\Vol(B_R)}
\end{equation}
is an eigenvalue for the Dirichlet-to-Neumann operator on $n$-forms.

\smallskip

 In \cite{R-S} we called a domain $\Omega$ {\it harmonic} if the mean-exit time function $E$ of $\Omega$, solution of the problem \eqref{met},  has constant normal derivative on $\Sigma$: we then observed that for a harmonic domain ${\rm Vol}(\Sigma)/{\rm Vol}(\Omega)$ is always an eigenvalue of $T^{[n]}$. A geodesic ball $B_R$, being rotationally invariant, is a harmonic domain because the function $E$ is also rotationally invariant.  Therefore the above result \eqref{rotball} is not a surprise. 
 
 \end{rem}


\section{Proof of Theorem \ref{newupper}}


Let $\Omega$ be a Riemannian domain with smooth boundary $\Sigma$ and inner unit normal vector field $N$. Consider the shape operator of $\Sigma$,  defined by $S(X)=-\nabla_XN$ for all $X\in T\Sigma$. Then $S$ can be extended to a self-adjoint operator $S^{[p]}$ acting on $p$-forms of $\Sigma$ by the rule:
$$
S^{[p]}(\omega)(X_1,\dots,X_p)=\sum_{j=1}^p\omega(X_1,\dots,S(X_j),\dots,X_p),
$$
for all $\omega\in\Lambda^{p}(\Sigma)$ and for all vectors $X_1,\dots,X_p\in T\Sigma$.

\smallskip

Let $(e_1,\dots,e_n)$ be an orthonormal basis of principal directions, so that 
$S(e_{j})=\eta_je_j$ for all $j$, where $\eta_1, \dots,\eta_n$ are the principal curvatures of $\Sigma$. Then, for any multi-index $1\leq j_1<\dots<j_p\leq n$ one has
$$
S^{[p]}(\omega)(e_{j_1},\dots,e_{j_p})=(\eta_{j_1}+\dots+\eta_{j_p})\omega(e_{j_1},\dots,e_{j_p}).
$$
 In particular, if $\omega$ is an $n$-form:
$$
S^{[n]}(\omega)=nH\omega,
$$
where $H$ is the mean curvature of $\Sigma$.

\smallskip

For later use we observe that, if $\xi$ is a $p$-form on $\Omega$ and 
$L_N\xi=di_N\xi+i_Nd\xi$ is its Lie derivative along $N$, then it follows directly from the definitions that, on $\Sigma$, one has (see for example Lemma $18$ in \cite{raulotsavo}):
\begin{equation}\label{lie}
\starred J(L_N\xi)=\starred J(\nabla_N\xi)-S^{[p]}(\starred J\xi).
\end{equation}

The proof of Theorem \ref{newupper} is based on the following estimates.
\begin{prop}\label{prop} 
Let $\xi$ be an exact parallel $p$-form on $\Omega$, with $p=1,\dots,n$. If $p=1$, then
\begin{eqnarray}\label{newcarf}
\nu_{2,0}(\Omega)\int_{\Omega}\abs{\xi}^2\leq\int_{\Sigma}\abs{i_N\xi}^2.
\end{eqnarray}
and if $p=2,\dots,n$:
\begin{eqnarray}\label{newcarp}
\nu_{1,p-1}(\Omega)\int_{\Omega}\abs{\xi}^2\leq\int_{\Sigma}\abs{i_N\xi}^2.
\end{eqnarray}
Moreover, if equality holds in \eqref{newcarf} or \eqref{newcarp}, then:
$$
S^{[p]}(\starred J\xi)=\nu\starred J\xi,
$$
where $\nu=\nu_{2,0}(\Omega)$ when $p=1$ and $\nu=\nu_{1,p-1}(\Omega)$ when $p\geq 2$.
\end{prop}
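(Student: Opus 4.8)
The plan is to test the variational characterizations \eqref{var} and \eqref{car02} against a suitable primitive of $\xi$, and then to combine three elementary facts. Since $\xi$ is parallel we have $d\xi=0$, $\delta\xi=0$ and $\abs{\xi}$ constant on $\Omega$; we may assume $\xi\neq 0$. Because $\xi$ is exact and co-closed, the Hodge--Morrey--Friedrichs theory for manifolds with boundary (see Schwarz \cite{schwarz}) provides a $(p-1)$-form $\alpha$ on $\Omega$ which is a \emph{co-closed} primitive of $\xi$ satisfying the Neumann boundary condition:
$$
d\alpha=\xi,\qquad \delta\alpha=0,\qquad i_N\alpha=0\ \text{ on }\ \Sigma .
$$
When $p=1$ this is immediate: write $\xi=df$, where $f$ is harmonic (since $\Delta f=\delta df=\delta\xi=0$) and is determined up to an additive constant, which we fix so that $\int_\Sigma f=0$; the conditions $\delta\alpha=0$ and $i_N\alpha=0$ are then vacuous. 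In every case $\alpha\neq 0$, because $d\alpha=\xi\neq 0$.

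Next I would record three facts. First, Green's formula together with $\delta\xi=0$ gives
$$
\int_\Omega\abs{\xi}^2=\int_\Omega\scal{d\alpha}{\xi}=-\int_\Sigma\scal{\starred J\alpha}{i_N\xi}.
$$
Second, Cauchy--Schwarz on $\Sigma$, together with the pointwise identity $\abs{\alpha}^2=\abs{\starred J\alpha}^2$ valid on $\Sigma$ because $i_N\alpha=0$ there, yields
$$
\int_\Omega\abs{\xi}^2\leq\Big(\int_\Sigma\abs{\alpha}^2\Big)^{1/2}\Big(\int_\Sigma\abs{i_N\xi}^2\Big)^{1/2},
$$
so that in particular $\int_\Sigma\abs{\alpha}^2>0$. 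Third, using $\alpha$ as a test form in \eqref{var} (when $p\geq 2$), or $f$ in \eqref{car02} (when $p=1$), together with $\delta\alpha=0$,
$$
\nu\int_\Sigma\abs{\alpha}^2\leq\int_\Omega\big(\abs{d\alpha}^2+\abs{\delta\alpha}^2\big)=\int_\Omega\abs{\xi}^2 ,
$$
where $\nu:=\nu_{2,0}(\Omega)$ if $p=1$ and $\nu:=\nu_{1,p-1}(\Omega)$ if $p\geq 2$.

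The inequality then follows at once: writing $A=\int_\Omega\abs{\xi}^2$, $B=\int_\Sigma\abs{\alpha}^2$, $C=\int_\Sigma\abs{i_N\xi}^2$, the second and third displays read $A^2\leq BC$ and $\nu B\leq A$, with $A>0$; multiplying the first by $\nu\geq 0$ and the second by $C\geq 0$ gives $\nu A^2\leq\nu BC\leq AC$, and dividing by $A$ gives $\nu A\leq C$, which is \eqref{newcarf}/\eqref{newcarp}. For the equality case, equality forces equality in both the Cauchy--Schwarz step and the variational step. Equality in Cauchy--Schwarz means $i_N\xi=c\,\starred J\alpha$ on $\Sigma$ for some constant $c$; feeding this into the Green identity gives $A=-cB$, which together with the equality $\nu B=A$ forces $c=-\nu$. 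Finally, apply \eqref{lie} to $\xi$: as $\xi$ is parallel ($\nabla_N\xi=0$) and closed ($i_Nd\xi=0$) we have $L_N\xi=d\,i_N\xi$, and since $\starred J$ commutes with the exterior derivative and $\bar d(\starred J\alpha)=\starred J(d\alpha)=\starred J\xi$,
$$
-S^{[p]}(\starred J\xi)=\starred J(L_N\xi)=\bar d\big(\starred J(i_N\xi)\big)=c\,\bar d(\starred J\alpha)=c\,\starred J\xi ,
$$
whence $S^{[p]}(\starred J\xi)=-c\,\starred J\xi=\nu\,\starred J\xi$, as claimed.

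The one genuinely nontrivial ingredient is the construction in the first paragraph when $p\geq 2$: one must produce a primitive of $\xi$ that is simultaneously co-closed \emph{and} satisfies the Neumann-type boundary condition $i_N\alpha=0$ (note one cannot ask $i_N\xi=0$, as that quantity drives the estimate). This is precisely where the hypotheses that $\xi$ is exact (i.e. $[\xi]=0$ in $H^p(\Omega)$) and $\delta\xi=0$ enter, through boundary Hodge theory; the remainder of the argument is only integration by parts, Cauchy--Schwarz, and the min--max formulas already at hand. I would also note that nothing in the argument uses that $\Omega$ is Euclidean, so the proposition in fact holds for any compact Riemannian domain carrying an exact parallel $p$-form.
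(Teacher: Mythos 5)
Your proof is correct and follows essentially the same route as the paper: both construct the co-closed (in the paper, co-exact) primitive $\theta$ of $\xi$ with $i_N\theta=0$, use it as a test form in \eqref{car02}/\eqref{var} together with Green's formula and Cauchy--Schwarz, and in the equality case arrive at the identity $i_N\xi=-\nu\,\starred J\theta$ before concluding via \eqref{lie}. The only difference is presentational: the paper asserts that equality makes $\theta$ an eigenform and cites \cite{R-S} for the ``easy work,'' whereas you carry out that work explicitly and extract the proportionality from the equality case of Cauchy--Schwarz, which amounts to the same thing.
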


\begin{proof} 
Inequalities \eqref{newcarf} and \eqref{newcarp} were proved in \cite{R-S} and hold, more generally, only assuming that $\xi$ is exact and satisfies $d\xi=\delta\xi=0$ on $\Omega$. Let us recall the main argument. As $\xi$ is exact, there exists a unique co-exact $(p-1)$-form $\theta$, called the {\it canonical primitive} of $\xi$, satisfying:
$$
\twosystem
{d\theta=\xi\quad\text{on $\Omega$},}
{i_N\theta=0 \quad\text{on $\Sigma$}.}
$$
When $p=1$, we take $\theta$ as the unique primitive of $\xi$ such that
$\int_{\Sigma}\theta=0$.

We now take $\theta$ as test $(p-1)$-form in the min-max principles (\ref{car02}) and (\ref{var}) and the inequalities (\ref{newcarf}) and (\ref{newcarp}) follow after some easy work (see \cite{R-S}). Now assume that equality holds: then $\theta$ is an eigenform associated to $\nu$, so that $i_Nd\theta=-\nu \starred J\theta$. This means:
$$
i_N\xi=-\nu \starred J\theta
$$
on $\Sigma$. Differentiating on $\Sigma$ one gets
$
d^{\Sigma}i_N\xi=-\nu\starred J\xi$, and in turn, as $\xi$ is closed:
$$
\starred J(L_N\xi)=-\nu\starred J\xi.
$$
As $\xi$ is parallel, \eqref{lie} gives $\starred J(L_N\xi)=-S^{[p]}(\starred J\xi)$ and then the assertion follows. 
\end{proof}

\medskip

We can now give the proof of Theorem \ref{newupper}. We can assume that $p\leq n-1$: in fact, the assertion for $p=n$ is a direct consequence of Theorem 5 and Corollary 6 in \cite{R-S}.

Let $\Cal P$ be the family of unit length parallel vector fields on $\real {n+1}$: then $\cal P$ is naturally identified with $\sphere n$.  For $V_1,\dots,V_p\in\Cal P$ consider the parallel $p$-form
$$
\xi=\starred{V_1}\wedge\dots\wedge\starred{V_p}
$$
where $\starred{V_j}$ denotes the dual $1$-form of $V_j$. Note that $\starred{V_j}$ is the differential of a linear function, so $\xi$ is also exact. Then, denoting by $\nu$ the eigenvalue as in Proposition \ref{prop}, we have from (\ref{newcarf}) and (\ref{newcarp})
\begin{equation}\label{in}
\nu\int_{\Omega}\abs{\xi}^2\leq\int_{\Sigma}\abs{i_N\xi}^2.
\end{equation}
We wish to integrate this inequality over all $(V_1,\dots,V_p)\in(\sphere{n})^p$. To that end, we introduce on ${\cal P}=\sphere n$ the measure:
$$
d\mu=\dfrac{n+1}{\Vol(\sphere n)}ds^2_n,
$$
where $ds^2_n$ is the canonical measure on $\sphere n$. The normalization is chosen so that, at each point $x\in\real{n+1}$, and for all tangent vectors $X,Y$ at $x$ one has:
\begin{equation}\label{parallel}
\int_{\sphere n}\scal{V}{X}\scal{V}{Y}d\mu(V)=\scal{X}{Y}.
\end{equation}
A straightforward calculation using \eqref{parallel} (explicitly done in Lemma 2.1 of \cite{savo}) then gives, at each point of $\Omega$ (respectively, of $\Sigma$):
$$
\twoarray
{\int_{(\sphere{n})^p}\abs{\starred{V_1}\wedge\dots\wedge\starred{V_p}}^2d\mu(V_1)\dots d\mu(V_p)=p!\binom{n+1}{p},}
{\int_{(\sphere{n})^p}\abs{i_N(\starred{V_1}\wedge\dots\wedge\starred{V_p})}^2d\mu(V_1)\dots d\mu(V_p)=p!\binom{n}{p-1}.}
$$
We now integrate both sides of \eqref{in} with respect to $(V_1,\dots,V_p)\in(\sphere{n})^p$. If $p=1$ we get:
$$
\nu_{2,0}(\Omega)\leq \dfrac{1}{n+1}\isoper,
$$
and if $p=2,\dots,n$ we get:
$$
\nu_{1,p-1}(\Omega)\leq \dfrac{p}{n+1}\isoper,
$$
which, after replacing $p$ by $p+1$, are the inequalities stated in  the theorem.

\medskip

We now discuss the equality case. If equality holds then, from Proposition \ref{prop}, we see that  
\begin{equation}\label{equality}
S^{[p]}(\starred J\xi)=\nu\starred J\xi
\end{equation}
 for all such $\xi=\starred{V_1}\wedge\dots\wedge\starred{V_p}$. Fix a point 
$x\in\Sigma$ and an orthonormal frame $(e_1,\dots,e_n)$ of principal directions at $x$. Fix a multi-index $j_1<\dots<j_p$ and choose:
$$
V_1=e_{j_1},\dots, V_p=e_{j_p}.
$$
At $x$, one has $\starred J\xi(e_{j_1},\dots,e_{j_p})=1$ and then, by the definition of $S^{[p]}$:
$$
S^{[p]}(\starred J\xi)(e_{j_1},\dots,e_{j_p})=\eta_{j_1}(x)+\dots+\eta_{j_p}(x),
$$
the corresponding $p$-curvature at $x$. From \eqref{equality} we then get:
$$
\nu=\eta_{j_1}(x)+\dots+\eta_{j_p}(x).
$$
This holds for all multi-indices $j_1<\dots<j_p$ and for all $x\in\Sigma$: hence, all $p$-curvatures are constant on $\Sigma$,  and equal to $\nu$. If $p<n$, this immediately implies that $\Sigma$ is totally umbilical, hence it is a sphere. If $p=n$, we have that the mean curvature of $\Sigma$ is constant; by the well-known Alexandrov theorem $\Sigma$ is, again, a sphere. 

Finally, from Theorem \ref{DNS}, we have that if $\Omega$ is a ball, then equality holds for $\nu_{2,0}$ and for $\nu_{1,p-1}$ provided that $p-1\geq (n+1)/2$. The proof is complete.



\vspace{0.8cm}     
Authors addresses:     
\nopagebreak     
\vspace{5mm}\\     
\parskip0ex     
\vtop{\hsize=6cm\noindent\obeylines}     
\vtop{     
\hsize=8cm\noindent     
\obeylines     
Simon Raulot
Laboratoire de Math\'ematiques R. Salem
UMR $6085$ CNRS-Universit\'e de Rouen
Avenue de l'Universit\'e, BP.$12$
Technop\^ole du Madrillet
$76801$ Saint-\'Etienne-du-Rouvray, France}     
     
\vspace{0.5cm}     
     
E-Mail:     
{\tt simon.raulot@univ-rouen.fr }  

\vtop{\hsize=6cm\noindent\obeylines}     
\vtop{     
\hsize=9cm\noindent     
\obeylines     
Alessandro Savo
Dipartimento SBAI, Sezione di Matematica 
Sapienza Universit\`a di Roma
Via Antonio Scarpa 16
 00161 Roma, Italy         
}     
     
\vspace{0.5cm}     
     
E-Mail:     
{\tt alessandro.savo@sbai.uniroma1.it  } 


\end{document}